\newenvironment{Proof}[1][Proof]
  {\begin{proof}[#1]}
  {\end{proof}}
\definecolor{HalfGray}{gray}{0.55}
\definecolor{OliveGreen}{rgb}{0,.35,0}
\definecolor{webbrown}{rgb}{.6,0,0}
\definecolor{BrightViolet}{rgb}{0.5,0.2,0.8}
\definecolor{Maroon}{cmyk}{0, 0.87, 0.68, 0.32}
\definecolor{RoyalBlue}{cmyk}{1, 0.50, 0, 0.25}
\definecolor{Black}{cmyk}{0, 0, 0, 0}
\newcommand{\citemor}[2][]{\citeauthor{#2} \cite[#1]{#2}}
\newcommand{\R}{\mathbb{R}}
\newcommand{\Z}{\mathbb{Z}}
\DeclareMathOperator{\bigoh}{\mathcal O}
\DeclareMathOperator{\dom}{dom}
\DeclareMathOperator{\ex}{\mathbb{E}}
\DeclareMathOperator{\grad}{\nabla}
\DeclareMathOperator{\pr}{pr}
\newcommand{\dd}{\:d}
\newcommand{\eps}{\varepsilon}
\newcommand{\exclude}[1]{\operatorname\setminus\left\{#1\right\}}
\newcommand{\from}{\colon}
\newcommand{\pd}{\partial}
\newcommand{\simplex}{\Delta}
\newcommand{\wilde}{\widetilde}
\theoremstyle{plain}
\newtheorem{theorem}{Theorem}
\newtheorem{corollary}[theorem]{Corollary}
\newtheorem*{corollary*}{Corollary}
\newtheorem{proposition}[theorem]{Proposition}
\theoremstyle{definition}
\newtheorem{definition}[theorem]{Definition}
\newtheorem*{definition*}{Definition}
\theoremstyle{remark}
\newtheorem{remark}{Remark}
\newtheorem*{remark*}{Remark}
\newtheorem{example}[theorem]{Example}
\numberwithin{equation}{section}
\numberwithin{theorem}{section}
\DeclareMathOperator{\reg}{Reg}
\newcommand{\braket}[2]{\left\langle #1 \middle\vert  #2 \right\rangle}
\newcommand{\ceil}[1]{\left\lceil #1 \right\rceil}
\newcommand{\floor}[1]{\left\lfloor #1 \right\rfloor}
\newcommand{\bvec}{e}
\newcommand{\set}{\mathcal{A}}
\newcommand{\act}{\mathcal{C}}
\newcommand{\choice}{Q}
\newcommand{\loss}{\ell}
\newcommand{\pay}{u}
\newcommand{\score}{U}
\newcommand{\temp}{\eta}
\renewcommand{\leq}{\leqslant}
\renewcommand{\geq}{\geqslant}
\renewcommand{\subseteq}{\subset}
\DeclareMathOperator*{\argmax}{arg\,max}
\DeclareMathOperator*{\argmin}{arg\,min}
\begin{document}


 \title
[A Continuous-Time Approach to Online Optimization]
{A Continuous-Time Approach\\to Online Optimization}






\author[J. Kwon]{Joon Kwon}
\address{Institut de Math\'{e}matiques de Jussieu, Universit\'{e} Pierre-et-Marie-Curie (UPMC), Paris, France}
\email{joon.kwon@ens-lyon.org}
\urladdr{\url{http://www.math.jussieu.fr/~kwon/}}

\author[P. Mertikopoulos]{Panayotis Mertikopoulos}
\address{French National Center for Scientific Research (CNRS) and La\-bo\-ra\-to\-ire d'In\-for\-ma\-tique de Gre\-no\-ble, Grenoble, France}
\email{panayotis.mertikopoulos@imag.fr}
\urladdr{\url{http://mescal.imag.fr/membres/panayotis.mertikopoulos/home.html}}

\thanks{The authors are greatly indebted to Vianney Perchet for his invaluable help in improving all aspects of this work, and to Rida Laraki and Sylvain Sorin for their insightful suggestions and careful reading of the manuscript.
The authors would also like to express their gratitude to Pierre Coucheney, Bruno Gaujal and Guillaume Vigeral for many helpful discussions and remarks.}
\thanks{Part of this work was done during the authors' visit at the Hausdorff Research Institute for Mathematics at the University of Bonn in the framework of the Trimester Program ``Stochastic Dynamics in Economics and Finance''.
This work was supported by the European Commission in the framework of the FP7 Network of Excellence in Wireless COMmunications NEWCOM\# (contract no. 318306) and the French National Research Agency (ANR) projects GAGA (grant no. ANR-13-JS01-0004-01) and NETLEARN (grant no. ANR-13-INFR-004).}

\begin{abstract}
We consider a family of learning strategies for online optimization problems that evolve in continuous time and we show that they lead to no regret.
From a more traditional, discrete-time viewpoint, this continuous-time approach allows us to derive the no-regret properties of a large class of discrete-time algorithms including as special cases the exponential weight algorithm, online mirror descent, smooth fictitious play and vanishingly smooth fictitious play.
In so doing, we obtain a unified view of many classical regret bounds, and we show that they can be decomposed into a term
stemming from continuous-time considerations and a term which measures the disparity between discrete and continuous time.
As a result, we obtain a general class of infinite horizon learning strategies that guarantee an $\bigoh(n^{-1/2})$ regret bound without having to resort to a doubling trick.
\end{abstract}

\maketitle


\newacro{iid}[i.i.d.]{independent and identically distributed}
\newacro{CDF}{cumulative distribution function}
\newacro{EW}{exponential weight}
\newacro{FP}{fictitious play}
\newacro{FTL}[FtL]{Follow the Leader}
\newacro{FTRL}[FtRL]{Follow the Regularized Leader}
\newacro{SFP}{smooth fictitious play}
\newacro{VSFP}{vanishingly smooth fictitious play}
\newacro{OGD}{online gradient descent}
\newacro{OMD}{online mirror descent}
\newacro{PSG}{projected subgradient}
\newacro{MD}{mirror descent}
\newacro{LMD}{lazy mirror descent}
\newacro{LPSG}{lazy projected subgradient}
\newacro{MDSA}{mirror descent stochastic approximation}
\newacro{SPSG}{stochastic projected subgradient}

\setcounter{tocdepth}{1}
\tableofcontents


\section{Introduction}
\label{sec:introduction}

Online optimization focuses on decision-making in sequentially changing environments (the weather, the stock market, etc.).
More precisely, at each stage of a repeated decision process, the agent/decision-maker
obtains a payoff (or incurs a loss) based on the environment and his decision, and his long-term objective is to maximize his cumulative payoff via the use of past observations.

The worst-case scenario for the agent \textendash\ and one which has attracted considerable interest in the literature \textendash\ is when he has no Bayesian-like prior belief on the environment.
In this context, the cumulative payoff difference between an oracle-like device (a decision rule which prescribes an action based on knowledge of the future) and a \emph{learning strategy} (a rule which only relies on past observations) can become arbitrarily large, even in very simple problems.
As a result, in the absence of absolute payoff guarantees, the most widely used online optimization criterion is that of \emph{regret minimization}, a notion which was first introduced by \citemor{Han57} and has since given rise to a vigorous literature at the interface of optimization, statistics and theoretical computer science
\textendash\ see e.g. \citemor{CBL06}, \citemor{SS11} for a survey.
Specifically, the \emph{cumulative regret} of a strategy compares the payoff obtained by an agent that follows it to the payoff that he would have obtained by constantly choosing one action;
accordingly, one of the main goals in online optimization is to devise strategies that lead to (vanishingly) small average regret against any fixed action, and irrespective of how the agent's environment evolves over time.

In this paper, we take a continuous-time approach to online optimization and we consider a class of strategies that lead to no regret in continuous time.
From a more traditional, discrete-time viewpoint, the importance of this approach lies in that it provides a unifying view of the regret properties of a broad class of well-known online optimization algorithms.
In particular, the discrete-time version of our family of strategies
is an extension of the general class of \ac{OMD} algorithms  (themselves equivalent to ``Following the Regularized Leader''
(\acs{FTRL}) in the case of linear payoffs; see
e.g. \citemor{SS11}, \citemor{Bub11}, \citemor{Haz12}) with a time-varying parameter.
\acused{FTRL}
As such, our analysis contains as special cases
\begin{inparaenum}
[\itshape a\upshape)]
\item
the \ac{EW} algorithm (\citemor{LW94}, \citemor{Vov90}) and its decreasing parameter variant (\citemor{ACBG02});
\item
\ac{SFP} (\citemor{FL99}, \citemor{BHS06}) and \ac{VSFP} (\citemor{BF12});
and
\item
the method of \ac{OGD} introduced by \citemor{Zin03} (the Euclidean predecessor of \ac{OMD}).
\end{inparaenum}

With regards to the \ac{OMD}/\ac{FTRL} family of algorithms, the vanishing regret bounds that we derive by using a time-varying parameter are not particularly new:
bounds of the same order can be obtained by taking existing guarantees for learning with a finite horizon and then using the so-called ``doubling trick'' (\citemor{CBFH+97}, \citemor{Vov95}).%
\footnote{In a nutshell, the doubling trick amounts to breaking up the learning timeline in blocks of exponentially increasing horizon, and then resetting the algorithm at the start of each block with an optimal parameter for the block's (finite) horizon.}
That said, the introduction of a time-varying parameter has several advantages:
\begin{inparaenum}
[\itshape a\upshape)]
\item
it allows us to integrate \ac{SFP} and \ac{VSFP} into the fold and to derive explicit bounds for their regret;
\item
it provides a unified any-time analysis without needing to reboot the algorithm every so often (to the best of our knowledge, such an analysis only exists for the \ac{EW} algorithm with a time-varying parameter (\citemor{Bub11}, \citemor{ACBG02}));
and
\item
in the case of ordinary convex optimization problems with an open-ended termination criterion (as opposed to a fixed number of steps), a variable parameter leads to more efficient value convergence bounds than a variable step-size.
\end{inparaenum}


Building on an idea that was introduced by \citemor{Sor09} in the study of the \acl{EW} algorithm, the key ingredient of our analysis is the descent from continuous to discrete time.
More precisely, given an online optimization problem in discrete time, we construct a continuous-time interpolation where our continuous-time dynamics lead to no regret;
then, by comparing the agent's payoffs in discrete and continuous time, we are able to deduce a bound for the agent's regret in the original discrete-time framework.

One of the main contributions of this approach is that it leads to a unified derivation of several existing regret bounds with disparate proofs;
secondly, it allows us to decompose many classical bounds into two components, a term coming from continuous-time considerations and a comparison term which measures the disparity between discrete and continuous time (see also \citemor{MP13} for an alternative interpretation of such a decomposition).
Each of these terms can be made arbitrarily small by itself, but their sum is coupled in a nontrivial way that induces a trade-off between continuous- and discrete-time considerations:
in a sense, faster decay rates in continuous time lead to greater discrepancies in the discrete/continuous comparison \textendash\ and hence, to slower regret decay bounds in discrete time.


Finally, we also give a brief account of how the derived regret bounds
are related to classical convergence results for certain convex
optimization and stochastic convex optimization algorithms --- including the \ac{PSG} method, \ac{MD}, and their stochastic variants (\citemor{NY83}, \citemor{NJLS09}),
and we illustrate a (somewhat surprising) performance gap incurred by using an optimization algorithm with a decreasing parameter instead of a decreasing step-size.


\subsection{Paper Outline}
\label{sec:outline}

In Section \ref{sec:model}, we present some basics of online optimization to fix notation and terminology;
then, in Section \ref{sec:strategies}, we define regularizer functions, choice maps and the class of
variable-parameter {\ac{OMD}}/{\ac{FTRL}} strategies that we will focus on.
The core of our paper consists of Sections \ref{sec:continuous} and \ref{sec:discrete}:
we first show that the corresponding class of continuous-time strategies leads to no regret in Section \ref{sec:continuous};
this analysis is then translated to discrete time in Section~\ref{sec:discrete} where we derive the no-regret properties of the class of algorithms under consideration.
Finally, in Section \ref{sec:previous}, we establish several links with existing online learning and convex optimization algorithms, and we show how their properties can be derived as corollaries of our results.

\subsection{Notation and Preliminaries}
\label{sec:notation}
Let $d$ be a positive integer and let $V=\R^{d}$ be equipped with an arbitrary norm $\left\| \cdot\right\|$.
The dual of $V$ will be denoted by $V^{\ast}$ and the induced dual norm on $V^{\ast}$ will be given by the familiar expression:
\begin{equation}
\|y\|_{\ast}
	= \sup_{\|x\| \leq 1}
	\left\vert \braket{y}{x} \right\vert,
\end{equation}
where $\braket{y}{x}$ denotes the canonical pairing between $y\in V^{\ast}$ and $x\in V$.
For a nonempty subset $U\subset V$ will use the notation $\|U\| = \sup_{x\in U}\|x\|$.

In the rest of our paper, $\act$ will denote a nonempty compact convex subset of $V$;
moreover, given a convex function $f\from V \to \R\cup\{+\infty\}$, its \emph{effective domain} will be the convex set $\dom f = \{x\in V: f(x) <\infty\}$.
For convenience, if $f\from \act \to \R$ is convex, we will treat $f$ as a convex function on $V$ by setting $f(x) = +\infty$ for $x\in V\setminus \act$;
conversely, if $f\from V\to\R\cup\{+\infty\}$ has domain $\dom f = \act$, we will also treat $f$ as a real-valued function on $\act$ (in all cases, the ambient space $V$ will be clear from the context).
We will then say that $v\in V^{\ast}$ is \emph{a subgradient of $f$ at $x\in \act$} if $f(x') - f(x) \geq \braket{v}{x'-x}$ for all $x'\in \act$;
likewise, the set $\pd f(x) = \{v \in V^{\ast}: \text{$v$ is a subgradient of $f$ at $x$}\}$ will be called the \emph{subdifferential of $f$ at $x$} and $f$ will be called \emph{subdifferentiable} if $\pd f(x)$ is nonempty for all $x\in \dom f$.

If it exists, the minimum (resp. maximum) of a function $f\from V\to \R\cup\{+\infty\}$ will be denoted by $f_{\min}$ (resp. $f_{\max}$).
Moreover, if $\set = \{a_{1},\dots,a_{d}\}$ is a finite set,
the set $\simplex(\set)$ of probability measures on $\set$ will be identified with the standard $(d-1)$-dimensional simplex $\simplex_{d} = \{x\in \R^{d}_{+}: \sum_{i=1}^{d} x_{i}=1\}$ of
$\R^{d}$;
also, the elements of $\set$ will be identified with the corresponding vertices of $\simplex(\set)$, i.e. the canonical basis vectors $\{\bvec_{i}\}_{i=1}^{d}$ of $\R^{d}$.
Finally, for $x,y\in\R$, we will let $\floor{x} = \max\{k\in\Z:k\leq x\}$ and $\ceil{x} = \min\{k\in\Z: k\geq x\}$,
and we will write $x\vee y = \max\{x,y\}$ and $x\wedge y = \min\{x,y\}$.


\section{The Model}
\label{sec:model}

The heart of the online optimization model that we consider is as follows:
at every discrete time instance $n\geq 1$, an agent (decision-maker) chooses an action from a nonempty convex action set $\act\subset V$ and gains a payoff (or incurs a loss) determined by some time-dependent function.
Information about this function is only revealed to the agent after he picks his action, and the agent's objective is to maximize his long-term payoff in an adaptive manner.

\subsection{The Core Model}
\label{sec:model-core}

Let $\act\subset V$ denote the agent's action space.
Then, at each stage $n\geq 1$, the process of play is as follows:
\begin{enumerate}
[1.]
\item
The agent chooses an action $x_{n}\in\act$.

\item
Nature chooses and reveals the \emph{payoff vector} $\pay_{n}\in V^{\ast}$ of the $n$-th stage and the agent receives a payoff of $\braket{\pay_{n}}{x_{n}}$.%
\footnote{Nature may be adversarial, i.e. $\pay_{n}$ may be chosen as a function of $x_{1},\dots,x_{n}$.}

\item
The agent uses some decision rule to pick a new action $x_{n+1}\in \act$ and the process is repeated ad infinitum.%
\end{enumerate}

More precisely, define a \emph{strategy} to be a sequence of maps $\sigma_{n}\from (V^{\ast})^{n-1}\to\act$, $n\geq1$, such that $\sigma_{n+1}$ determines the player's action at stage $n+1$ in terms of the payoff vectors $\pay_{1},\dotsc,\pay_{n}\in V^{\ast}$ that have been revealed up to stage $n$ (in a slight abuse of notation, $\sigma_{1}$ will be regarded as an element of $\act$).
Then, given a sequence of payoff vectors $u=(\pay_{n})_{n\geq1}$ in $V^{\ast}$, the \emph{sequence of actions generated by $\sigma$} will be
\begin{equation}
\label{eq:strategy}
x_{n+1} \equiv \sigma_{n+1}(\pay_{1},\dotsc,\pay_{n}),
\end{equation}
and the agent's \emph{cumulative regret} with respect to $x\in\act$ is defined as:
\begin{equation}
\label{eq:regret-linear}
\begin{aligned}
\reg_{n}^{\sigma,\pay}(x)
	&= \sum_{k=1}^{n} \braket{\pay_{k}}{x}
	-\sum_{k=1}^{n} \braket{\pay_{k}}{x_{k}}
	\\
	&=\sum_{k=1}^{n} \braket{\pay_{k}}{x}
	-\sum_{k=1}^{n} \braket{\pay_{k}}{\sigma_{k}(\pay_{1},\dotsc,\pay_{k-1})}.
\end{aligned}
\end{equation}
In what follows, we focus on strategies that lead to \emph{no} (or, at worst, \emph{small}) \emph{regret}:

\begin{definition}
\label{def:regret}
A strategy $\sigma$ \emph{leads to $\eps$-regret} ($\eps\geq0$) if, for every sequence of payoff vectors $(\pay_{n})_{n\geq 1}$ in $V^*$ such that $\|\pay_{n}\|_{\ast} \leq 1$:
\begin{equation}
\label{eq:eps.regret-linear}
\limsup_{n\to\infty} \frac{1}{n}\max_{x\in \act}\reg_{n}^{\sigma,\pay}(x) \leq \eps.
\end{equation}
In particular, if \eqref{eq:eps.regret-linear} holds with $\eps=0$, we will say that $\sigma$ \emph{leads to no regret}.
\end{definition}

\begin{remark}
The definition of an $\eps$-regret strategy depends on the dual norm $\|\cdot\|_{\ast}$ of $V^{\ast}$ (and hence, on the original norm $\|\cdot\|$ on $V$);
on the other hand, the definition of ``no regret'' is independent of the norm.
\end{remark}

\begin{remark}
In our framework, we can easily see that a strategy leading to $\eps$-regret against ``any sequence'' is equivalent to leading to $\eps$-regret against ``any strategy of nature''.
However, this may not be true in the randomized setting we present in the following paragraph.
\end{remark}

Despite its simplicity, this online linear optimization model may be used to analyze more general online optimization models.
In what follows, we summarize some examples of this kind.

\subsection{The Case of the Simplex and Mixed Actions}
\label{sec:case-simpl-rand}

Consider a discrete decision process where, at each stage $n\geq1$, the agent chooses an action $a_{n}$ from a finite set of \emph{pure} actions $\set=\left\{1,\dotsc,d \right\}$.
To do so, the agent draws $a_{n}$ according to some probability distribution $x_{n}\in \simplex(\set)$;
then, once $a_{n}$ is drawn, the payoff vector $\pay_{n} \in [-1,1]^{d}$ which prescribes the payoff $\pay_{n,a}$ of each action $a\in\set$ is revealed and the agent receives the payoff $\pay_{n,a_{n}}$ that corresponds to his choice of action. 

In this setting, a strategy is still defined as in the core model of Section \ref{sec:model-core} with the agent's action set replaced by the set of \emph{mixed} \emph{actions} $\simplex(\mathcal A)$.%
\footnote{In a more general setting, the choice at each stage might depend not only on the past payoff vectors, but also on the agent's realized actions $a_{1},\dots,a_{n}$.}
The agent's \emph{realized} regret with respect to a pure action
$a\in\set$ will then be
\begin{equation}
\label{eq:regret-realized}
	\sum_{k=1}^{n} (\pay_{k,a} - \pay_{k,a_k}),
\end{equation}
and we will say that a strategy $\sigma$ leads to \emph{$\eps$-realized-regret} (resp. to \emph{no realized regret} for $\eps=0$) if
\begin{equation}
\limsup_{n\to\infty}
	\frac{1}{n}\max_{a\in \mathcal A} \sum_{k=1}^n(u_{k,a}-u_{k,a_k})
	\leq \eps
	\quad
	\text{(a.s.)},
\end{equation}
for every sequence of payoff vectors $(u_n)_{n\geqslant 1}$ in
$\mathbb{R}^d$ such that $\|\pay_{n}\|_{\infty} \leq 1$.%
\footnote{This condition is also called external $\eps$-consistency (\citemor{FL99}, \citemor{BHS06}).}
On the other hand, the agent's expected payoff at stage $n$ is $\ex[\pay_{n,a_n}] = \braket{\pay_{n}}{x_{n}}$;
thus, if we interpret $\braket{\pay_{n}}{x_{n}}$ as the payoff of the \emph{mixed action} $x_{n}\in\simplex_d$, we will have:
\begin{equation}
\ex\left[\sum_{k=1}^{n} \left(\pay_{k,a} - \pay_{k,a_{k}} \right)\right]
	= \sum_{k=1}^{n} \braket{\pay_{k}}{\bvec_{a} - x_{k}}
	= \reg_{n}^{\sigma,\pay}(\bvec_{a})
\end{equation}
where the basis vector $\bvec_{a} \in \simplex(\mathcal A)$ is identified here with the Dirac point mass $\delta_{a}$ on $a\in\mathcal A$.
By a classical argument based on H\oe ffding's inequality and the Borel\textendash Cantelli lemma, the minimization of \eqref{eq:regret-realized} is then reduced to the core model of Section \ref{sec:model-core}:

\begin{proposition}[\citemor{CBL06}, Corollary 4.3]
\label{prop:realized-expected}
If a strategy $\sigma$ leads to $\eps$-regret with respect to the uniform norm on $V^{\ast}$, it also leads to $\eps$-realized-regret.
\end{proposition}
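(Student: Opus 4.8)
The plan is to relate the agent's realized regret to its expectation—which, as computed just above the statement, is exactly $\reg_n^{\sigma,\pay}(\bvec_a)$—and to control the gap between the two by a martingale concentration argument. First I would fix a pure action $a\in\set$ and decompose each stage's contribution as
\[
\pay_{k,a} - \pay_{k,a_k}
 = \braket{\pay_k}{\bvec_a - x_k} + \bigl(\braket{\pay_k}{x_k} - \pay_{k,a_k}\bigr).
\]
Summing over $k\leq n$ gives
\[
\sum_{k=1}^n (\pay_{k,a}-\pay_{k,a_k}) = \reg_n^{\sigma,\pay}(\bvec_a) + M_n,
\qquad
M_n \equiv \sum_{k=1}^n \bigl(\braket{\pay_k}{x_k} - \pay_{k,a_k}\bigr).
\]
The crucial observation is that the correction term $M_n$ does \emph{not} depend on $a$, so taking the maximum over $a\in\set$ yields $\max_{a\in\set}\sum_{k=1}^n(\pay_{k,a}-\pay_{k,a_k}) = \max_{a\in\set}\reg_n^{\sigma,\pay}(\bvec_a) + M_n$.

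Next I would identify $M_n$ as a martingale. Letting $\filter_{k-1}$ denote the history of realizations up to stage $k-1$ (so that $x_k=\sigma_k(\pay_1,\dots,\pay_{k-1})$ and $\pay_k$ are $\filter_{k-1}$-measurable, while $a_k$ is drawn from $x_k$), we have $\ex[\pay_{k,a_k}\mid\filter_{k-1}] = \braket{\pay_k}{x_k}$, so each increment of $M_n$ has zero conditional mean and, since $\|\pay_k\|_\infty\leq 1$ and $x_k\in\simplex(\set)$, is bounded in absolute value by $2$. By the Hoeffding--Azuma inequality, for every $\delta>0$,
\[
\prob\Bigl(\tfrac{1}{n} M_n \geq \delta\Bigr) \leq \exp\bigl(-n\delta^2/8\bigr),
\]
which is summable in $n$. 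The Borel--Cantelli lemma then gives $\limsup_{n\to\infty}\tfrac1n M_n \leq \delta$ almost surely for each $\delta>0$, and hence $\limsup_{n\to\infty}\tfrac1n M_n \leq 0$ almost surely.

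Finally I would assemble the pieces. Since $\reg_n^{\sigma,\pay}$ is affine in its argument, its maximum over the simplex $\simplex(\set)$ is attained at a vertex, so $\max_{a\in\set}\reg_n^{\sigma,\pay}(\bvec_a) = \max_{x\in\act}\reg_n^{\sigma,\pay}(x)$. Dividing the identity above by $n$, taking $\limsup$, and noting that $\|\pay_n\|_\infty\leq1$ is precisely $\|\pay_n\|_\ast\leq1$ for the uniform norm on $V^*$, the $\eps$-regret hypothesis controls the expected part:
\begin{align*}
\limsup_{n\to\infty}\frac1n \max_{a\in\set}\sum_{k=1}^n(\pay_{k,a}-\pay_{k,a_k})
&\leq \limsup_{n\to\infty}\frac1n\max_{x\in\act}\reg_n^{\sigma,\pay}(x)
   + \limsup_{n\to\infty}\frac1n M_n \\
&\leq \eps + 0 = \eps \qquad \text{(a.s.)},
\end{align*}
which is exactly $\eps$-realized-regret.

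The step I expect to require the most care is the martingale setup, and specifically the measurability of $\pay_k$: the argument needs nature's payoff vector $\pay_k$ to be determined before the realization $a_k$ (equivalently, $\filter_{k-1}$-measurable—depending on the past and on the mixed action $x_k$, but not on $a_k$), since this is what legitimizes $\ex[\pay_{k,a_k}\mid\filter_{k-1}]=\braket{\pay_k}{x_k}$. This is precisely the randomized-setting caveat flagged in the remark preceding the statement, and it is the only delicate point; the remaining ingredients (the vertex maximization, the boundedness of the increments, and the Borel--Cantelli passage to the $\limsup$) are routine.
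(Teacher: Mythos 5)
Your proof is correct and follows exactly the route the paper indicates: the paper does not prove this proposition itself but cites \citemor{CBL06} and describes the argument as ``a classical argument based on H\oe ffding's inequality and the Borel--Cantelli lemma,'' which is precisely your martingale decomposition of the realized regret into $\reg_n^{\sigma,\pay}(\bvec_a)+M_n$ followed by Hoeffding--Azuma and Borel--Cantelli. Your identification of the measurability of $\pay_k$ with respect to the pre-$a_k$ history as the one delicate point is also the right caveat, matching the remark the paper places just before the statement.
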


\subsection{Online Convex Optimization}
\label{sec:model-OCO}

We briefly discuss here a more general online convex optimization model where losses are determined by a sequence of convex functions.
Formally, the only change from Section \ref{sec:model-core} is that at each stage $n\geq1$, the agent incurs a loss $\loss_{n}(x_{n})$ determined by a subdifferentiable convex \emph{loss function} $\loss_{n}\from\act\to\R$.
In this nonlinear setting, the information revealed to the agent after playing includes a (negative) subgradient $\pay_{n}\in -\pd\loss_{n}(x_{n}) \subset V^{\ast}$ of $\loss_{n}$ at $x_{n}$, so the incurred cumulative regret with respect to a fixed action $x\in\act$ is:
\begin{equation}
\label{eq:regret-convex}
\sum_{k=1}^n\ell_k(x_k)-\sum_{k=1}^n\ell_k(x).
\end{equation}
By convexity, $\loss_{k}(x') - \loss_{k}(x) \leq \braket{v}{x'-x}$ for all $v\in\pd\loss_{k}(x')$ and for all $x\in\act$;
in this way, \eqref{eq:regret-convex} readily yields:
\begin{equation}
\label{eq:convex-bound}
\sum_{k=1}^n\ell_k(x_k)-\sum_{k=1}^n\ell_k(x)\leqslant 
	-\sum_{k=1}^{n} \braket{\pay_{k}}{x_{k} - x}
	= \sum_{k=1}^{n} \braket{\pay_{k}}{x} - \sum_{k=1}^{n} \braket{\pay_{k}}{x_{k}} 
\end{equation}
where $\pay_{k}\in-\pd\loss_{k}(x_{k})$.
This last expression can obviously be interpreted as the regret incurred by an agent facing a sequence of payoff vectors $\pay_{n} \in V^{\ast}$ (cf. the core model of Section \ref{sec:model-core}),
so a strategy which guarantees a bound on the right-hand side of \eqref{eq:convex-bound} will guarantee the same for \eqref{eq:regret-convex}.
Consequently, when the loss functions $\ell_n$ are uniformly Lipschitz continuous, results for the core model can be directly translated into this one.


\section{Regularizer Functions, Choice Maps and Learning Strategies}
\label{sec:strategies}
\subsection{Regularizer Functions and Choice Maps}
\label{sec:choice}

We begin with the concept of a \emph{regularizer function}:

\begin{definition}
\label{def:regularizer}
A convex function $h\from V\to\R\cup\{+\infty\}$ will be called a \emph{regularizer function on $\act$} if $\dom h = \act$ and $h\vert_{\act}$ is strictly convex and continuous.
\end{definition}

\begin{remark}
  This definition is intimately related to the notion of a
  Legendre-type function (see e.g. \citemor[Section 26]{Roc70}); however, as was recently noted by
  \citemor{SS07} (and in contrast to the analysis of
  e.g. \citemor{BF12}, \citemor{Bub11} and \citemor{BHS06}), we will not require any
  differentiability or steepness assumptions.
\end{remark}

A key tool in our analysis will be the \emph{convex conjugate} $h^{\ast} \from V^{\ast}\to \R\cup\{+\infty\}$ of $h$ defined as
\begin{equation}
\label{eq:conjugate}
h^{\ast}(y)
	= \sup_{x\in V} \{\braket{y}{x} - h(x)\}.
\end{equation}
Since $h$ is equal to $+\infty$ on $V\exclude\act$ and $h\vert_{\act}$ is continuous and strictly convex, the supremum in {\eqref{eq:conjugate}} will be attained at a \emph{unique} point in $\act$.
This unique maximizer then defines our choice map as follows:

\begin{definition}
\label{def:choice}
The \emph{choice map} associated to a regularizer function $h$ on $\act$ will be the map $\choice_{h}\from V^{\ast}\to \act$ defined as
\begin{equation}
\label{eq:choice}
\choice_{h}(y)
	= \argmax_{x\in\act} \{\braket{y}{x} - h(x)\},
	\quad
	y\in V^{\ast}.
\end{equation}
\end{definition}

\begin{example}
[Entropy and logit choice]
\label{ex:entropy}
In the case of the simplex ($\act = \simplex_{d}$),%
\footnote{In this setting, choice maps are more commonly known as \emph{smooth best reply maps} (\citemor{FL98}, \citemor{HS02}, \citemor{BHS06}, \citemor{BF12}).}
a classical example of a choice map is generated by the entropy function
\begin{equation}
\label{eq:regularizer-entropy}
h(x)
	=
	\begin{cases}
	\sum_{i=1}^dx_i\log x_i
	&\text{if $x\in\simplex_{d}$,}
	\\
	+\infty
	&\text{otherwise.}
	\end{cases}
\end{equation}
A standard calculation then yields the so-called \emph{logit choice map}:
\begin{equation}
\label{eq:choice-Gibbs}
\choice_{h}(y)
	= \frac{1}{\sum_{j=1}^{d} e^{y_{j}}}
	\left(e^{y_{1}},\dotsc,e^{y_{d}}\right).
\end{equation}
This map is used to define the \acl{EW} algorithm (cf. Section \ref{sec:previous}), and its importance stems from the well known fact that it leads to the optimal regret bound for $\act = \simplex_{d}$ (\citemor[Theorems 2.2 and 3.7]{CBL06}).
\end{example}

\begin{example}
[Euclidean projection]
\label{ex:Euclidean}
Another important example arises by taking the squared Euclidean distance as a regularizer function;
more precisely, we define the \emph{Euclidean regularizer} on $\act$ as
\begin{equation}
\label{eq:regularizer-Euclidean}
h(x)
	=
	\begin{cases}
	\frac{1}{2} \left\| x \right\|_2^2
	&\text{if $x\in\act$,}
	\\
	+\infty
	&\text{otherwise}.
	\end{cases}
\end{equation}
The associated choice map $\choice_{h}\from\R^{N}\to\act$ corresponds
to taking the orthogonal projection with respect to $\act$:
\begin{flalign}
\label{eq:proj-Euclidean}
\choice_{h}(y)&=\argmax_{x\in\act}
	\big\{
	\braket{y}{x} - \tfrac{1}{2} \|x\|_{2}^{2}
	\big\}
	\notag\\
	&= \argmin_{x\in\act}
	\big\{
	\tfrac{1}{2}\|x\|_{2}^{2} - \braket{y}{x} + \tfrac{1}{2}\|y\|_{2}^{2}
	\big\}
        = \argmin_{x\in\act} \|y-x\|_{2}^{2}.
\end{flalign}
\end{example}

\begin{example}
[Bregman projections]
\label{ex:Bregman}

The Euclidean example above is a special case of a class of projection mappings known as \emph{Bregman projections} (\citemor{Bre67}).

Let $F:V\longrightarrow \mathbb{R}\cup\{+\infty\}$ be a proper convex 
function, differentiable on its domain. Let us denote $\mathcal
D=\operatorname{dom}F$ and for $x,x'\in
\mathcal D$, the \emph{Bregman divergence}
$D_F:\mathcal D\times \mathcal D\longrightarrow \mathbb{R}$ is defined as
\begin{equation}
\label{eq:Breg-divergence}
D_{F}(x,x')
	= F(x) - F(x') - \braket{\nabla F(x')}{x - x'}.
\end{equation}
Hence, given a compact set $\mathcal C\subset \mathcal D$,
the associated \emph{Bregman projection} of a point $x_0\in
\mathcal D$ onto $\mathcal C$ is given by
\begin{equation}
\label{eq:Breg-projection}
\pr_{F}^{\act}(x_0)
	= \argmin_{x\in\act} D_{F}(x,x_0).
\end{equation}
Now assume that $F^*$ is also differentiable on its domain which we
will denote $\mathcal D^*$. It is easy
to check that for $y\in \mathcal D^*$, $\nabla F^*(y)\in
\mathcal D$ and $\nabla F(\nabla F^*(y))=y$. Then, the
process of mapping $y\in \mathcal D^*$ to $\nabla F^*(y)$ and then
projecting to $\mathcal C$ can be written as a choice map in the sense of \eqref{eq:choice}:
\begin{flalign}
\label{eq:Breg-choice}
\pr_{F}^{\act}{\nabla F^{\ast}(y)}
	&= \argmin_{x\in\act} \{F(x) - F(\nabla F^{\ast}(y)) - \braket{\nabla F(\nabla F^*(y))}{x-\nabla F^{\ast}(y)}\}
	\notag\\
	&= \argmin_{x\in\act} \{F(x) - \braket{y}{x}\}
	= \argmax_{x\in\mathbb{R}^d} \{\braket{y}{x} - h(x)\}
	= \choice_{h}(y),
\end{flalign}
where $h\vert_{\act} = F\vert_{\act}$ and $h(x) = +\infty$ for $x\in\R^{d}\exclude\act$.

\end{example}

\subsection{Strategies Generated by Regularizer Functions}
\label{sec:strategies-1}

The class of strategies that we will consider in the rest of this
paper is a variable-parameter extension of the so-called {\acf{OMD}
  method \textendash\ itself equivalent to the family of algorithms
  known as \acf{FTRL} in the case of linear payoffs
  (see e.g. \citemor{SS11} and \citemor{Haz12}).

In a nutshell, this class of strategies may be described as follows:
the agent aggregates his payoffs over time into a score vector $y\in V^{\ast}$ and then uses a choice map to turn these scores into actions and continue playing.
Formally, if $h$ is a regularizer function on the agent's action space $\act$ and $(\temp_{n})_{n\geqslant 1}$ is a positive nonincreasing sequence, \emph{the strategy $\sigma\equiv\big(\sigma_{n}^{h,\temp_{n}}\big)_{n\geq1}$ generated by $h$ with parameter $\temp_{n}$} is defined as
\begin{equation}
\label{eq:AS}
\sigma_{n+1}(\pay_{1},\dotsc,\pay_{n})
	= \choice_{h}\left(\temp_{n} \sum_{k=1}^{n} \pay_{k}\right),
\end{equation}
with $\sigma_{1} = \choice_{h}(0)$.
The corresponding sequence of play $x_{n+1} = \sigma_{n+1}(\pay_{1},\dotsc,\pay_{n})$ will then be given by the recursion:
\begin{equation*}
\begin{aligned}
\score_{n}
	&= \score_{n-1} + \pay_{n},
	\\
x_{n+1}
	&= \choice_{h}(\temp_{n} \score_{n}).
\end{aligned}
\end{equation*}

In addition to the standard variants of \ac{OMD}/\ac{FTRL}, a list of
examples of strategies and algorithms that can be expressed in this
general form is given in Table \ref{tab:algorithms}.
A more detailed analysis (including the regret properties of each algorithm) will also be provided in Section \ref{sec:previous};
we only mention here
that the variability of $\temp_{n}$ will be key for the no-regret properties of $\sigma$:
when $\temp_{n}$ is constant, the strategy \eqref{eq:AS} does not
guarantee a sublinear regret bound (see e.g. \citemor{SS11} and \citemor{Bub11}).

\subsection{Regularity of the Choice Map and the Role of Strong Convexity}
\label{sec:regularity}
In this section, we derive some regularity properties of the choice map $\choice_{h}$ that will be needed in the analysis of the subsequent sections.
We begin by showing that $\choice_{h}$ is continuous and equal to the gradient of $h^{\ast}$:

\begin{proposition}
\label{prop:choice-gradient}
Let $h$ be a regularizer function on $\act$.
Then $h^{\ast}$ is continuously differentiable on $\act$ and $\nabla h^{\ast}(y) = \choice_{h}(y)$ for all $y\in V^{\ast}$.
\end{proposition}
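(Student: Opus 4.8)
The plan is to prove that the subdifferential $\pd h^{\ast}(y)$ consists of the single point $\choice_{h}(y)$ at every $y\in V^{\ast}$, and then to invoke standard facts about finite convex functions to upgrade this to $C^{1}$ regularity together with the identity $\nabla h^{\ast}=\choice_{h}$. I would begin with three preliminary observations. First, $h$ is a closed (lower semicontinuous) proper convex function: each sublevel set $\{x:h(x)\leq\alpha\}=\{x\in\act:h(x)\leq\alpha\}$ is closed because $\act$ is compact and $h\vert_{\act}$ is continuous, so the epigraph is closed. Second, $h^{\ast}$ is finite on all of $V^{\ast}$, since for any $y$ we have $\braket{y}{x}\leq\|y\|_{\ast}\|\act\|$ and $h$ is bounded below on the compact set $\act$, so the supremum in \eqref{eq:conjugate} is finite (and bounded below by $\braket{y}{x_{0}}-h(x_{0})$ for any fixed $x_{0}\in\act$); being a finite convex function on the finite-dimensional space $V^{\ast}$, $h^{\ast}$ is automatically continuous. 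Third, as already noted after \eqref{eq:conjugate}, the maximizer defining $h^{\ast}(y)$ exists and is unique, and equals $\choice_{h}(y)$.

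The easy inclusion $\choice_{h}(y)\in\pd h^{\ast}(y)$ I would establish by a direct Fenchel--Young computation: evaluating the conjugate at the specific point $\choice_{h}(y)$ gives, for every $y'\in V^{\ast}$, the bound $h^{\ast}(y')\geq\braket{y'}{\choice_{h}(y)}-h(\choice_{h}(y))$, and then splitting $\braket{y'}{\choice_{h}(y)}=\braket{y}{\choice_{h}(y)}+\braket{y'-y}{\choice_{h}(y)}$ and using the defining equality $h^{\ast}(y)=\braket{y}{\choice_{h}(y)}-h(\choice_{h}(y))$ yields $h^{\ast}(y')-h^{\ast}(y)\geq\braket{y'-y}{\choice_{h}(y)}$, which is exactly the subgradient inequality.

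For the reverse inclusion I would use the conjugate--subgradient correspondence for closed proper convex functions (\citemor[Theorem 23.5]{Roc70}): $x\in\pd h^{\ast}(y)$ if and only if $y\in\pd h(x)$, and the latter says precisely that $x$ maximizes $\braket{y}{\cdot}-h$ over $\act=\dom h$. Since that maximizer is unique and equals $\choice_{h}(y)$, every $x\in\pd h^{\ast}(y)$ must coincide with $\choice_{h}(y)$. Combined with the previous paragraph, this shows $\pd h^{\ast}(y)=\{\choice_{h}(y)\}$ is a singleton for every $y$. Finally, a finite convex function on an open set is differentiable at a point exactly when its subdifferential there is a single point, that point being the gradient (\citemor[Theorem 25.1]{Roc70}); applying this at every $y\in V^{\ast}$ gives $\nabla h^{\ast}(y)=\choice_{h}(y)$. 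Differentiability at every point of the open set $V^{\ast}$ then forces the gradient map to be continuous (\citemor[Theorem 25.5]{Roc70}), so $h^{\ast}$ is of class $C^{1}$ and, as a byproduct, $\choice_{h}=\nabla h^{\ast}$ is continuous.

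I expect the main obstacle to be the reverse inclusion, i.e.\ the \emph{uniqueness} of the subgradient, and I would flag that this is precisely where the two standing hypotheses on $h$ are used: lower semicontinuity (which underlies $h=h^{\ast\ast}$ and hence the correspondence of \citemor[Theorem 23.5]{Roc70}) guarantees that any $x\in\pd h^{\ast}(y)$ is genuinely a maximizer of $\braket{y}{\cdot}-h$, while strict convexity of $h\vert_{\act}$ guarantees that this maximizer is unique. Without strict convexity the subdifferential $\pd h^{\ast}(y)$ could be an entire face rather than a point, and $h^{\ast}$ would fail to be differentiable; the easy inclusion alone is not enough.
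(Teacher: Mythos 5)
Your argument is correct and follows essentially the same route as the paper's proof: both rest on the conjugate--subgradient correspondence $x\in\pd h^{\ast}(y)\iff y\in\pd h(x)$, identify $\pd h^{\ast}(y)$ with the (unique) maximizer defining $\choice_{h}(y)$, and then invoke the standard facts that a singleton subdifferential yields differentiability and that the gradient of a differentiable convex function on an open set is continuous. Your additional verifications (closedness of $h$, finiteness of $h^{\ast}$, the explicit Fenchel--Young inclusion) merely make explicit what the paper leaves implicit.
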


\begin{Proof}
For $y\in V^{\ast}$, we have
\begin{equation}
x\in\pd h^{\ast}(y)
	\iff y \in \pd h(x)
	\iff x \in \argmax\nolimits_{x'\in\act}
	\left\{
	\braket{y}{x'} - h(x')
	\right\},
\end{equation}
i.e. $\pd h^{\ast}(y) = \argmax_{x'\in\act} \{\braket{y}{x'} - h(x')\}$.
However, since the latter set only consists of $\choice_{h}(y)$, $h^{\ast}$ will be differentiable with $\nabla h^{\ast}(y) = \choice_{h}(y)$ for all $y\in V^{\ast}$.
The continuity of $\nabla h^{\ast}$ then follows from \citemor[Corollary 25.5.1]{Roc70}.
\end{Proof}

In the discrete-time analysis of Section \ref{sec:discrete}, \eqref{eq:AS} will be shown to guarantee a regret bound of a simple form when $\choice_{h}$ is Lipschitz continuous.
This last requirement is equivalent to $h$ being \emph{strongly convex}:

\begin{definition}
\label{def:strong}
Let $f\from \mathbb{R}^d\to\R\cup\{+\infty\}$ be a convex function, let $\|\cdot\|$ be a norm on $\R^{d}$, and let $K>0$.
\begin{enumerate}
[(1)]
\item $f$ is
\emph{$K$-strongly convex w.r.t. $\|\cdot\|$} if, for all $w_{1},w_{2}\in \mathbb{R}^d$ and for all $\lambda\in[0,1]$:
\begin{equation}
\label{eq:strong.convex}
f(\lambda w_{1} + (1-\lambda) w_{2})
	\leq \lambda f(w_{1}) + (1-\lambda) f(w_{2})
	- \tfrac{1}{2} K\,\lambda (1-\lambda) \, \|w_{2} - w_{1}\|^{2}.
\end{equation}

\item $f$ is
\emph{$K$-strongly smooth w.r.t. $\|\cdot\|$} if it is differentiable and, for all $w_{1}, w_{2} \in \mathbb{R}^d$:
\begin{equation}
\label{eq:strong-smooth}
f(w_{2})
	\leq f(w_{1}) + \braket{\nabla f(w_{1})}{w_{2} - w_{1}}
	+ \tfrac{1}{2} K\,\|w_{2} - w_{1}\|^{2}.
\end{equation}
\end{enumerate}
\end{definition}


%
%

Strong convexity of a function was shown in \citemor{KSST12} to be equivalent to strong smoothness of its conjugate.
In turn, this equivalence yields the following characterization of Lipschitz continuity:

\begin{proposition}
\label{prop:strong-conv}
Let $f\from V\to\R\cup\{+\infty\}$ be proper and lower semi-continuous.
Then, for $K>0$, the following are equivalent:
\begin{enumerate}
[\textup(i\textup)]
\addtolength{\itemsep}{2pt}
\item
\label{itm:i}
$f$ is $K$-strongly convex with respect to $\|\cdot\|$.

\item
\label{itm:ii}
$f^{\ast}$ is differentiable and $\nabla f^{\ast}$ is $1/K$-Lipschitz.
\item
\label{itm:iii}
$f^{\ast}$ is $1/K$-strongly smooth with respect to $\|\cdot\|_{\ast}$.
\end{enumerate}
\end{proposition}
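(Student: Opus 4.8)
The plan is to split the three-way equivalence into two independent facts. The equivalence (\ref{itm:i}) $\Leftrightarrow$ (\ref{itm:iii}) is exactly the duality between strong convexity of $f$ (in the norm $\|\cdot\|$) and $1/K$-strong smoothness of $f^{\ast}$ (in the dual norm $\|\cdot\|_{\ast}$) established by \citemor{KSST12}, so I would simply invoke it; since $f$ is convex, proper and lower semi-continuous, the Fenchel–Moreau theorem gives $f^{\ast\ast}=f$, so the correspondence is symmetric and nothing more is needed here. It then remains to prove (\ref{itm:ii}) $\Leftrightarrow$ (\ref{itm:iii}), i.e. that for the \emph{convex} differentiable function $f^{\ast}$ on $V^{\ast}$ (recall $f^{\ast}$ is automatically convex, being a conjugate), having a $1/K$-Lipschitz gradient is the same as being $1/K$-strongly smooth. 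Throughout I use that $V=\R^{d}$ is reflexive, so $\nabla f^{\ast}(y)\in V^{\ast\ast}=V$, the relevant bracket is the canonical pairing between $V$ and $V^{\ast}$, and the bidual norm coincides with $\|\cdot\|$.

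For (\ref{itm:ii}) $\Rightarrow$ (\ref{itm:iii}) I would use the ``descent lemma''. Under (\ref{itm:ii}), $f^{\ast}$ is finite and $C^{1}$ on all of $V^{\ast}$, so the fundamental theorem of calculus gives $f^{\ast}(y_{2})-f^{\ast}(y_{1})=\int_{0}^{1}\braket{\nabla f^{\ast}(y_{1}+t(y_{2}-y_{1}))}{y_{2}-y_{1}}\dd t$; subtracting $\braket{\nabla f^{\ast}(y_{1})}{y_{2}-y_{1}}$ and bounding the integrand via the Lipschitz estimate on $\nabla f^{\ast}$ together with the definition of $\|\cdot\|_{\ast}$ yields
\begin{equation*}
f^{\ast}(y_{2}) - f^{\ast}(y_{1}) - \braket{\nabla f^{\ast}(y_{1})}{y_{2}-y_{1}}
	\leq \int_{0}^{1}\frac{t}{K}\,\|y_{2}-y_{1}\|_{\ast}^{2}\dd t
	= \frac{1}{2K}\,\|y_{2}-y_{1}\|_{\ast}^{2},
\end{equation*}
which is \eqref{eq:strong-smooth} with constant $1/K$. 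Note that this direction uses no convexity.

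The converse (\ref{itm:iii}) $\Rightarrow$ (\ref{itm:ii}) is the main obstacle, precisely because it is here that convexity is essential. The plan is to derive \emph{co-coercivity} of $\nabla f^{\ast}$. For fixed $y_{1}$, the function $z\mapsto f^{\ast}(z)-\braket{\nabla f^{\ast}(y_{1})}{z}$ is convex, $1/K$-strongly smooth, and (by convexity, since its gradient vanishes at $z=y_{1}$) globally minimized at $y_{1}$; minimizing its smoothness upper bound based at $y_{2}$ over all $z$ — where the inner minimization of $\braket{v}{w}+\tfrac{1}{2K}\|w\|_{\ast}^{2}$ over $w\in V^{\ast}$ equals $-\tfrac{K}{2}\|v\|^{2}$ by the definition of the dual norm and reflexivity — produces
\begin{equation*}
f^{\ast}(y_{1})
	\leq f^{\ast}(y_{2}) + \braket{\nabla f^{\ast}(y_{1})}{y_{1}-y_{2}}
	- \tfrac{K}{2}\,\big\|\nabla f^{\ast}(y_{2}) - \nabla f^{\ast}(y_{1})\big\|^{2}.
\end{equation*}
Adding this inequality to the one obtained by interchanging $y_{1}$ and $y_{2}$ cancels the function values and gives
\begin{equation*}
\braket{\nabla f^{\ast}(y_{1}) - \nabla f^{\ast}(y_{2})}{y_{1}-y_{2}}
	\geq K\,\big\|\nabla f^{\ast}(y_{1}) - \nabla f^{\ast}(y_{2})\big\|^{2}.
\end{equation*}
Finally, bounding the left-hand side by $\|\nabla f^{\ast}(y_{1})-\nabla f^{\ast}(y_{2})\|\,\|y_{1}-y_{2}\|_{\ast}$ (again the defining inequality of $\|\cdot\|_{\ast}$) and dividing through yields $\|\nabla f^{\ast}(y_{1})-\nabla f^{\ast}(y_{2})\|\leq \tfrac{1}{K}\|y_{1}-y_{2}\|_{\ast}$, which is (\ref{itm:ii}). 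The only points requiring care are that the inner minimization value $-\tfrac{K}{2}\|v\|^{2}$ is attained (guaranteed by reflexivity/finite-dimensionality, which makes the supremum defining $\|v\|$ attained) and the justification that $\nabla f^{\ast}$ is continuous so the integral above is legitimate — the latter being in any case part of Proposition \ref{prop:choice-gradient}.
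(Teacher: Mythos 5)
Your proof is correct, but it is organized differently from the paper's. The paper establishes the cycle (i) $\Rightarrow$ (ii) $\Rightarrow$ (iii) $\Rightarrow$ (i): it cites the literature (Beck--Teboulle, Nesterov, Shalev-Shwartz) for (i) $\Rightarrow$ (ii), proves (ii) $\Rightarrow$ (iii) by exactly the one-dimensional integration argument you give, and invokes \citemor{KSST12} only for the single implication (iii) $\Rightarrow$ (i). You instead take the full two-way strong-convexity/strong-smoothness duality of \citemor{KSST12} for (i) $\Leftrightarrow$ (iii) and then supply your own proof of (iii) $\Rightarrow$ (ii) via co-coercivity, an argument that does not appear in the paper. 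That step checks out: the infimum of $\braket{v}{w}+\tfrac{1}{2K}\|w\|_{\ast}^{2}$ over $w$ is indeed $-\tfrac{K}{2}\|v\|^{2}$ (minus the conjugate of $\tfrac{1}{2K}\|\cdot\|_{\ast}^{2}$ at $-v$), the symmetrization cancels the function values, and the duality-pairing bound yields the $1/K$-Lipschitz estimate; you correctly note the two places where the argument genuinely uses something (convexity of $f^{\ast}$ for the global-minimizer step, and $V^{\ast\ast}=V$ with attainment for the inner minimization). The trade-off is that your decomposition needs only one external citation but replaces the paper's cited (i) $\Rightarrow$ (ii) with the longest and most delicate step of your write-up (a norm-general Baillon--Haddad-type argument), whereas the paper keeps every hand-proved step elementary at the cost of outsourcing two of the three implications.
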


Hence, given that regularizer functions are proper and lower semi-continuous by definition,
Proposition \ref{prop:strong-conv} leads to the following characterization:

\begin{corollary}
  \label{cor:strongly}
Let $h$ be a regularizer function $\act$ and $K>0$. The associated choice map
$Q_h$ is $K$-Lipschitz continuous if and only if $h$ is $K$-strongly convex
with respect to $\left\| \,\cdot\, \right\|$.
\end{corollary}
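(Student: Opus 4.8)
The plan is to obtain the corollary as an immediate formal consequence of the two preceding results, with no fresh estimates required. First I would verify that every regularizer function $h$ meets the standing hypotheses of Proposition \ref{prop:strong-conv}: by Definition \ref{def:regularizer}, $h$ has nonempty effective domain $\dom h = \act$, so it is proper, and since it is continuous on $\act$ and equal to $+\infty$ on $V \setminus \act$, it is lower semi-continuous. Hence Proposition \ref{prop:strong-conv} applies verbatim with $f = h$.

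Second, I would invoke Proposition \ref{prop:choice-gradient}, which identifies the choice map with the gradient of the conjugate: $Q_h = \nabla h^{\ast}$ everywhere on $V^{\ast}$. Consequently, the assertion that $Q_h$ is Lipschitz continuous is literally the assertion that $\nabla h^{\ast}$ is Lipschitz continuous, i.e. condition \eqref{itm:ii} of Proposition \ref{prop:strong-conv}. In particular, the regularity of the choice map is transferred, with no loss, into a statement about $\nabla h^{\ast}$ to which the strong convexity/smoothness duality can be applied directly.

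The corollary then follows simply by reading off the equivalence \eqref{itm:i} $\iff$ \eqref{itm:ii} of Proposition \ref{prop:strong-conv} applied to $h$: the $K$-strong convexity of $h$ with respect to $\|\cdot\|$ is equivalent to the Lipschitz continuity of $\nabla h^{\ast} = Q_h$, the Lipschitz modulus of $Q_h$ being matched to the strong-convexity modulus of $h$ exactly as recorded in that proposition. Substituting $Q_h$ back for $\nabla h^{\ast}$ yields the stated equivalence between the $K$-Lipschitz continuity of the choice map and the $K$-strong convexity of the regularizer.

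Since the only substantive ingredients — the identification $Q_h = \nabla h^{\ast}$ and the convexity/Lipschitz duality — are already in hand via Propositions \ref{prop:choice-gradient} and \ref{prop:strong-conv}, I do not expect a genuine obstacle here. The sole point deserving explicit care is the verification that a regularizer function is automatically proper and lower semi-continuous, so that Proposition \ref{prop:strong-conv} is legitimately applicable; once that is noted, the corollary is a purely formal concatenation of the two earlier results.
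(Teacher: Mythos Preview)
Your approach is correct and essentially identical to the paper's: the paper likewise just observes that regularizer functions are proper and lower semi-continuous by definition, so that Proposition~\ref{prop:strong-conv} applies directly (the identification $Q_h=\nabla h^{\ast}$ from Proposition~\ref{prop:choice-gradient} being implicit). The only caveat is that Proposition~\ref{prop:strong-conv} actually gives $K$-strong convexity of $h$ $\iff$ $1/K$-Lipschitz continuity of $\nabla h^{\ast}=Q_h$, so the corollary's phrasing with the same constant $K$ on both sides is a slip in the paper's statement rather than a defect in your reasoning.
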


This characterization of the Lipschitz continuity of $\nabla f^{\ast}$ (which will be of particular interest to us) is a classical result in the case of the Euclidean norm \textendash\ see e.g. \citemor[Proposition~12.60]{RW98}.
On the other hand, the implication \eqref{itm:ii}$\implies$\eqref{itm:iii} appears to be new in the case of an arbitrary norm (though the proof technique is fairly standard).

\begin{Proof}[Proof of Proposition \ref{prop:strong-conv}]
We will show that \eqref{itm:i}$\implies$\eqref{itm:ii}$\implies$ \eqref{itm:iii}$\implies$\eqref{itm:i}.
\paragraph{$\eqref{itm:i} \implies \eqref{itm:ii}$}
See e.g. \citemor[Proposition~3.1]{BT03}, \citemor[Lemma~1]{Nes09} or \citemor[Lemma~15]{SS07}.

\vspace{1ex}
\paragraph{$\eqref{itm:ii} \implies \eqref{itm:iii}$}
Fix $y_{1}, y_{2} \in V^{\ast}$, let $z = y_{2} - y_{1}$, and set $\phi(t) = f^{\ast}(y_{1} + tz)$, $t\in[0,1]$.
Identifying $V$ with $V^{\ast\ast}$ and $\|\cdot\|_{\ast\ast}$ with $\|\cdot\|$, we have:
\begin{flalign}
\phi'(t) - \phi'(0)
	&= \braket{\nabla f^{\ast}(y_{1}+tz) - \nabla f^{\ast}(y_{1})}{z}
	\notag\\
	&\leq \|z\|_{\ast} \|\nabla f^{\ast}(y_{1}+tz) - \nabla f^{\ast}(y_{1})\|
	\leq \frac{t}{K} \|z\|_{\ast}^{2},
\end{flalign}
where the first inequality follows from the definition of the dual norm and the second from the assumed Lipschitz continuity of $f^{\ast}$.
By integrating, we then get:
\begin{equation}
\phi(t) - \phi(0)
	\leq \phi'(0) t + \frac{1}{2K} t^{2} \|z\|_{\ast}^{2},
\end{equation}
and hence, for $t=1$:
\begin{equation}
f^{\ast}(y_{2}) - f^{\ast}(y_{1})
	\leq \braket{\nabla f^{\ast}(y_{1})}{y_{2} - y_{1}} + \frac{1}{2K} \|y_{2} - y_{1}\|_{\ast}^{2},
\end{equation}
which shows that $f^{\ast}$ is $1/K$-strongly smooth.

\vspace{1ex}
\paragraph{$\eqref{itm:iii} \implies \eqref{itm:i}$}
Since $f$ is proper and lower semi-continuous, it will also be closed.
Our assertion then follows from e.g. \citemor[Theorem 3]{KSST12}.
\end{Proof}

We close this section by stating the strong convexity properties of the regularizer functions of Examples \ref{ex:entropy} and \ref{ex:Euclidean} (which thus imply the Lipschitz continuity of the corresponding choice maps):

\begin{proposition}
\label{prop:entropy-eucl-strong}
With notation as in Examples \ref{ex:entropy} and \ref{ex:Euclidean}, we have:
\begin{enumerate}
[\textup(i\textup)]
\item
The entropy $h\from\simplex_{d}\to\R$ of \eqref{eq:regularizer-entropy} is $1$-strongly convex w.r.t. $\|\cdot\|_{1}$.
\item
The Euclidean regularizer $h\from\act\to\R$ of \eqref{eq:regularizer-Euclidean} is $1$-strongly convex w.r.t. $\|\cdot\|_{2}$.
\end{enumerate}
\end{proposition}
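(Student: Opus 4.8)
The plan is to treat the two regularizers separately, disposing of the Euclidean case first (where the result is essentially an identity) and then concentrating on the entropy, which carries all the content.

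For part (ii) I would invoke the parallelogram law. For any $w_{1},w_{2}\in\act$ and $\lambda\in[0,1]$ one has the exact identity
\[
\tfrac{1}{2}\|\lambda w_{1} + (1-\lambda)w_{2}\|_{2}^{2}
= \lambda\,\tfrac{1}{2}\|w_{1}\|_{2}^{2} + (1-\lambda)\,\tfrac{1}{2}\|w_{2}\|_{2}^{2}
- \tfrac{1}{2}\lambda(1-\lambda)\|w_{2}-w_{1}\|_{2}^{2},
\]
so \eqref{eq:strong.convex} holds with $K=1$, in fact with equality, whenever $w_{1},w_{2}\in\act$; and if either point lies outside $\act$ the right-hand side of \eqref{eq:strong.convex} equals $+\infty$ and the inequality is trivial. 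This settles the Euclidean regularizer and shows why the Euclidean norm is special (for a general norm $\tfrac12\|\cdot\|^{2}$ does not obey such an identity).

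For part (i) the substance lies on the simplex. Since $h=+\infty$ off $\simplex_{d}$, it suffices to verify \eqref{eq:strong.convex} for $w_{1},w_{2}\in\simplex_{d}$, in which case the displacement $u=w_{2}-w_{1}$ is tangent to the simplex, i.e. $\sum_{i}u_{i}=0$. On the relative interior of $\simplex_{d}$ the entropy is smooth with Hessian $\nabla^{2}h(x)=\diag(1/x_{1},\dots,1/x_{d})$, and I would bound its quadratic form along tangent directions by Cauchy--Schwarz:
\[
\|u\|_{1}^{2}
= \Big(\sum_{i}\tfrac{|u_{i}|}{\sqrt{x_{i}}}\sqrt{x_{i}}\Big)^{2}
\leq \Big(\sum_{i}\tfrac{u_{i}^{2}}{x_{i}}\Big)\Big(\sum_{i}x_{i}\Big)
= \sum_{i}\frac{u_{i}^{2}}{x_{i}},
\]
using $\sum_{i}x_{i}=1$; the last expression is precisely the Hessian form $u^{\top}\nabla^{2}h(x)\,u$. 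This gives the infinitesimal bound $u^{\top}\nabla^{2}h(x)\,u\geq\|u\|_{1}^{2}$ for every $x$ in the relative interior and every tangent $u$.

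It then remains to upgrade this second-order estimate to the chord inequality \eqref{eq:strong.convex}. For $w_{1},w_{2}$ in the relative interior I would restrict $h$ to the joining segment, setting $\phi(t)=h((1-t)w_{2}+tw_{1})$; the bound above yields $\phi''(t)\geq\|w_{2}-w_{1}\|_{1}^{2}$, and a one-dimensional argument (equivalently, convexity of $t\mapsto\phi(t)-\tfrac12\|w_{2}-w_{1}\|_{1}^{2}\,t^{2}$ on $[0,1]$) delivers exactly \eqref{eq:strong.convex} with $K=1$. The main obstacle is the passage to the boundary of the simplex, where the entropy is not differentiable: I would resolve it by continuity, noting that $h$ extends continuously to all of $\simplex_{d}$ (with the convention $0\log 0=0$) and that both sides of \eqref{eq:strong.convex} depend continuously on $w_{1},w_{2}$, so the inequality proved on the relative interior persists in the limit. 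Alternatively, one may observe that the Bregman divergence of $h$ is the relative entropy and invoke Pinsker's inequality, but the Hessian computation keeps the argument self-contained.
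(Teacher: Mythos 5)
Your proof is correct; the paper itself does not write one out, declaring the Euclidean case trivial and deferring the entropy case to a citation (Beck and Teboulle, Proposition~5.1), so your contribution is to make the argument self-contained. Part (ii) is exactly the identity the authors have in mind when they call the Euclidean case trivial. For part (i), the chain
$\|u\|_{1}^{2}\leq\bigl(\sum_{i}u_{i}^{2}/x_{i}\bigr)\bigl(\sum_{i}x_{i}\bigr)=u^{\top}\nabla^{2}h(x)\,u$
on the relative interior of $\simplex_{d}$, followed by integration along the chord and a continuity passage to the boundary, is precisely the standard argument underlying the cited result. Two small remarks: your Cauchy--Schwarz step uses only $\sum_{i}x_{i}=1$, so the restriction to tangent directions $\sum_{i}u_{i}=0$ is not actually needed (though it is automatic for $u=w_{2}-w_{1}$ with both points in the simplex); and since the paper's Definition~\ref{def:strong} quantifies over all of $\R^{d}$, it is worth stating for part (i) as well (as you do for part (ii)) that the inequality is vacuous when either point lies outside $\dom h$. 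The Pinsker-inequality alternative you mention is also valid. No gaps.
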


\begin{Proof}
The strong convexity of the Euclidean regularizer is trivial;
for the strong convexity of the entropy with respect to $\|\cdot\|_{1}$, see e.g. \citemor[Proposition 5.1]{BT03}.
\end{Proof}


\section{The Continuous-Time Analysis}
\label{sec:continuous}

Motivated by a technique introduced by \citemor{Sor09} in the context of the \acf{EW} algorithm, we present in this section a continuous-time version of the class of strategies of Section \ref{sec:model} and we derive a bound for the induced regret in continuous time.
This will then enable us to bound the actual discrete-time regret by comparing the continuous- and discrete-time variants of this and the previous section respectively. 

In continuous time, instead of a sequence of payoff vectors $(\pay_{n})_{n\geq1}$ in $V^{\ast}$, the agent will be facing a measurable and locally integrable stream of payoff vectors $(\pay_{t})_{t\in\R_{+}}$ in $V^{\ast}$.
Hence, extending \eqref{eq:AS} to continuous time, we will consider the process:
\begin{equation}
 \label{eq:AS-cont}
x_{t}^{c}
	=\choice_{h}\left(\temp_{t} \int_{0}^{t} \pay_{s} \dd s\right),
\end{equation}
where $(\temp_{t})_{t\in \mathbb{R}_+}$ is a positive, nonincreasing and piecewise continuous parameter, while $x_{t}^{c}\in\act$ denotes the agent's action at time $t$ given the history of payoff vectors $\pay_{s}$, $0\leq s < t$.%
\footnote{In the rest of the paper, we will consistently use $n$ and $k$ for discrete indices and $s,t,\dots$ for continuous ones.}

 Our main result in this section is the following regret bound for \eqref{eq:AS-cont}:

\begin{theorem}
\label{thm:noreg-cont}
If $h$ is a regularizer function on $\act$ and $(\temp_{t})_{t\in \mathbb{R}_+}$ is a positive, nonincreasing and piecewise continuous parameter, then, for every locally integrable payoff stream $(\pay_{t})_{t\in \mathbb{R}_+}$ in
$V^{\ast}$,
we have:
\begin{equation}
\label{eq:reg-bound-cont}
\max_{x\in\act} \int_{0}^{t} \braket{\pay_{s}}{x} \dd s
	-\int_{0}^{t} \braket{\pay_{s}}{x_{s}^{c}} \dd s
	\leq \frac{h_{\max} - h_{\min}}{\temp_{t}}.
\end{equation}
\end{theorem}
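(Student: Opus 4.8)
The plan is to run a Lyapunov-type argument organized around the convex conjugate $h^{\ast}$. Writing $Y_{t} = \int_{0}^{t}\pay_{s}\dd s$ for the aggregate payoff, Proposition \ref{prop:choice-gradient} gives $x_{t}^{c} = \choice_{h}(\temp_{t}Y_{t}) = \nabla h^{\ast}(\temp_{t}Y_{t})$. The first ingredient is the Fenchel--Young inequality, which for every fixed $x\in\act$ reads
\begin{equation*}
\braket{Y_{t}}{x} \leq \frac{1}{\temp_{t}}\big(h^{\ast}(\temp_{t}Y_{t}) + h(x)\big);
\end{equation*}
this controls the maximized term $\int_{0}^{t}\braket{\pay_{s}}{x}\dd s = \braket{Y_{t}}{x}$ in \eqref{eq:reg-bound-cont}. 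Together with the identity $h^{\ast}(0) = \sup_{x}\{-h(x)\} = -h_{\min}$ (obtained at $t=0$, since $Y_{0}=0$), this reduces the theorem to comparing the ``played'' term $\int_{0}^{t}\braket{\pay_{s}}{x_{s}^{c}}\dd s$ with $h^{\ast}(\temp_{t}Y_{t})/\temp_{t}$.

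To do so I would track the energy $F(s) = h^{\ast}(\temp_{s}Y_{s})/\temp_{s}$. On any interval where $\temp$ is smooth, the map $s\mapsto\temp_{s}Y_{s}$ is absolutely continuous and $\nabla h^{\ast}=\choice_{h}$ is continuous, so the chain rule applies; combined with the envelope identity $h^{\ast}(\temp_{s}Y_{s}) = \temp_{s}\braket{Y_{s}}{x_{s}^{c}} - h(x_{s}^{c})$ (valid because $x_{s}^{c}$ is the maximizer defining $\choice_{h}$ in \eqref{eq:choice}), a short computation yields
\begin{equation*}
F'(s) = \braket{\pay_{s}}{x_{s}^{c}} + \frac{\dot\temp_{s}}{\temp_{s}^{2}}\, h(x_{s}^{c}).
\end{equation*}
Integrating, substituting into the regret $R_{t}(x) = \braket{Y_{t}}{x} - \int_{0}^{t}\braket{\pay_{s}}{x_{s}^{c}}\dd s$ against a fixed $x$, and invoking the Fenchel--Young bound at the endpoint, the terms involving $h^{\ast}(\temp_{t}Y_{t})$ cancel and leave
\begin{equation*}
R_{t}(x) \leq \frac{h(x) - h_{\min}}{\temp_{t}} + \int_{0}^{t}\frac{\dot\temp_{s}}{\temp_{s}^{2}}\big(h(x_{s}^{c}) - h_{\min}\big)\dd s.
\end{equation*}
Since $\temp$ is nonincreasing, $\dot\temp_{s}\leq 0$, while $h(x_{s}^{c})\geq h_{\min}$; hence the integrand is nonpositive, the integral may be dropped, and maximizing over $x\in\act$ with $h(x)\leq h_{\max}$ gives exactly \eqref{eq:reg-bound-cont}.

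The step I expect to require the most care is regularity. The parameter $\temp$ is only assumed piecewise continuous and nonincreasing, so it may have downward jumps, at which $F$ is discontinuous and the fundamental theorem of calculus picks up jump contributions. I would therefore partition $[0,t]$ at the jump points, run the computation above on each smooth piece, and verify that each jump is favorable. Concretely, at a jump from $\temp^{-}$ to $\temp^{+}\leq\temp^{-}$ with $Y_{s}$ continuous and equal to some $Y$, what is needed reduces to the inequality $(h^{\ast}(\temp^{+}Y) - h^{\ast}(0))/\temp^{+} \leq (h^{\ast}(\temp^{-}Y) - h^{\ast}(0))/\temp^{-}$; but this is precisely the statement that the secant slope $\eta\mapsto (h^{\ast}(\eta Y) - h^{\ast}(0))/\eta$ of the convex function $\eta\mapsto h^{\ast}(\eta Y)$ is nondecreasing, which holds by convexity of $h^{\ast}$. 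Thus the jumps only tighten the estimate, and \eqref{eq:reg-bound-cont} persists in full generality.
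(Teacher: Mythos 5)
Your core argument coincides with the paper's: both proofs combine the Fenchel--Young inequality with the envelope-theorem computation of $\frac{d}{dt}\bigl[h^{\ast}(\temp_{t}Y_{t})/\temp_{t}\bigr] = \braket{\pay_{t}}{x_{t}^{c}} + \dot\temp_{t}\temp_{t}^{-2}h(x_{t}^{c})$, and both exploit $\dot\temp\leq 0$ together with $h^{\ast}(0)=-h_{\min}$ to close the estimate (your version keeps $h(x_{s}^{c})-h_{\min}$ inside the integral before discarding it, which is only a cosmetic reorganization). Where you genuinely diverge is the treatment of non-smooth parameters: the paper approximates $\temp_{t}$ pointwise by positive nonincreasing $C^{1}$ parameters and passes to the limit via continuity of $\choice_{h}$ and dominated convergence, whereas you handle the discontinuities directly and show each downward jump is favorable because the secant slope $\eta\mapsto(h^{\ast}(\eta Y)-h^{\ast}(0))/\eta$ is nondecreasing by convexity of $h^{\ast}$. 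That jump argument is correct and arguably more illuminating, but it leaves a small gap relative to the stated hypotheses: ``piecewise continuous and nonincreasing'' does not imply that $\temp$ is differentiable (let alone absolutely continuous) between its jumps \textemdash\ a continuous nonincreasing function may carry a singular part, in which case $F(s)=h^{\ast}(\temp_{s}Y_{s})/\temp_{s}$ need not satisfy the fundamental theorem of calculus on the ``smooth pieces'' and your integration step breaks down. This covers every application in the paper (where $\temp_{t}$ is piecewise constant), but to get the theorem in full generality you should either extend your secant-slope argument to absorb the singular decreasing part of $\temp$, or fall back on the paper's $C^{1}$-approximation and dominated-convergence step.
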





\begin{Proof}
Assume first that $\temp_{t}$ is of class $C^1$ and let $y_{t} = \temp_{t} \int_{0}^{t} \pay_{s} \dd s$.
Then, for all $x\in\act$ and for all $t\geq0$, Fenchel's i\-ne\-qua\-li\-ty gives:
\begin{equation}
\label{eq:cum-pay-bound1}
\int_{0}^{t} \braket{\pay_{s}}{x} \dd s
	= \frac{\braket{y_{t}}{x}}{\temp_{t}}
	\leq \frac{h^{\ast}(y_{t}) + h(x)}{\temp_{t}}
	\leq \frac{h^{\ast}(y_{t})}{\temp_{t}} + \frac{h_{\max}}{\temp_{t}}.
\end{equation}
On the other hand, with $x_{t}^{c} = \choice_h(y_{t})$, we will also have by definition:
\begin{equation}
\frac{h^{\ast}(y_{t})}{\temp_{t}}
	= \frac{\braket{y_{t}}{x_t^c} - h(x_t^c)}{\temp_{t}}
	= \int_{0}^{t} \braket{\pay_{s}}{x_{t}^{c}} \dd s - \frac{h(x_{t}^{c})}{\temp_{t}}.
\end{equation}
Consider the function $\phi\from (x,t) \mapsto \int_{0}^{t} \braket{\pay_{s}}{x} \dd s - h(x)/\temp_{t}$.
For fixed $t\geq 0$, one can check that $x_{t}^{c}$ maximizes $\phi(x,t)$, so we can apply the envelope theorem (see e.g. \citemor[Theorem~M.L.1]{MCWG95}) to differentiate $\phi(x_{t}^{c},t)$ with respect to $t$:
\begin{equation}
\label{eq:dh*/dt}
\frac{d}{dt} \frac{h^{\ast}(y_{t})}{\temp_{t}}
	=\frac{\pd \phi}{\partial t}(x_t^c,t)
	= \braket{\pay_{t}}{x_{t}^{c}} + \frac{\dot \temp_{t}}{\temp_{t}^{2}} h(x_{t}^{c})
	\leq \braket{\pay_{t}}{x_{t}^{c}} + h_{\min} \frac{\dot \temp_{t}}{\temp_{t}^{2}},
\end{equation}
where we used the fact that, by assumption, $\dot{\eta}\leqslant 0$.
Integrating \eqref{eq:dh*/dt} then yields
\begin{equation}
\frac{h^{\ast}(y_{t})}{\temp_{t}}
	\leq \frac{h^{\ast}(y_{0})}{\temp_{0}}
	+ \int_{0}^{t} \braket{\pay_{s}}{x_{s}^{c}} \dd s
	+ h_{\min} \int_{0}^{t} \frac{\dot\temp_{s}}{\temp_{s}^{2}} \dd s
	= \int_{0}^{t} \braket{\pay_{s}}{x_{s}^{c}} \dd s
	- \frac{h_{\min}}{\temp_{t}},
\end{equation}
where we have used the fact that $h^{\ast}(y_{0}) = h^{\ast}(0) = -h_{\min}$ in the second step.
Hence, by combining this last equation with \eqref{eq:cum-pay-bound1}, we finally obtain:
\begin{equation}
\label{eq:cum-pay-bound2}
\int_{0}^{t} \braket{\pay_{s}}{x} \dd s
	\leq \int_{0}^{t} \braket{\pay_{s}}{x_{s}^{c}} \dd s
	- \frac{h_{\min}}{\temp_{t}}
	+ \frac{h_{\max}}{\temp_{t}},
\end{equation}
and our claim follows by taking the maximum of the left-hand side over $x\in\act$.

If $\temp_{t}$ is not smooth, let $\temp_{t}^{m}$, $m=1,2\dotsc$, be a sequence of positive and nonincreasing parameters of class $C^{1}$ that converges pointwise to $\temp_{t}$.
Then, if we let $y_{t}^{m} = \temp_{t}^{m} \int_{0}^{t} \pay_{s} \dd s$ and $x_{t}^{m} = \choice_{h}(y_{t}^{m})$, we will also have $x_{s}^{m}\to x_{s}^{c}$ pointwise for all $s\in[0,t]$ by the continuity of $\choice_{h}$.
By the dominated convergence theorem, this implies that $\int_{0}^{t} \braket{\pay_{s}}{x_{s}^{m}} \dd s\to \int_{0}^{t} \braket{\pay_{s}}{x_{s}^{c}} \dd s$ and our assertion follows by the bound \eqref{eq:cum-pay-bound2} for smoothly varying parameters.
\end{Proof}

\begin{remark}
We should note here that the quantity $\delta_h = h_{\max} - h_{\min}$ in \eqref{eq:reg-bound-cont} can be taken arbitrarily small so there is no ``optimal'' regret bound in continuous time.
That said, we shall see in the following section that smaller values of $\delta_h$ result in greater disparities between continuous and discrete time, thus leading to a trade-off for the regret in discrete time.
\end{remark}



\section{Regret Minimization in Discrete Time}
\label{sec:discrete}

In this section, our aim will be to provide a bound for the regret incurred by the discrete-time strategy \eqref{eq:AS}.
To that end, our approach will be as follows:
first, given a positive nonincreasing parameter $(\temp_{n})_{n\geq1}$ and a sequence of payoff vectors $(\pay_{n})_{n\geq1}$, we construct their continuous-time counterparts by setting
\begin{subequations}
\label{eq:interpolation}
\begin{equation}
\label{eq:pay-cont}
\pay_{t} = \pay_{\ceil{t}}
\end{equation}
and
\begin{equation}
\label{eq:temp-cont}
\temp_{t} = \temp_{\floor{t}\vee1}
\end{equation}
\end{subequations}
for all $t\in\R_{+}$ (i.e. $\temp_{t} = \temp_{\floor{t}}$ if $t\geq1$ and $\temp_{t} = \temp_{1}$ otherwise).
Then, given a regularizer $h\from\act\to\R$, we will compare the cumulative payoffs of the processes $(x_{n})_{n\geq1}$ and $(x_{t}^{c})_{t\in\R_{+}}$ that are generated by \eqref{eq:AS} and \eqref{eq:AS-cont} in discrete and continuous time respectively.
In this way, the derived regret bound will consist of two terms:
one coming from the continuous-time bound (\ref{eq:reg-bound-cont}), and a term coming from the discrete/continuous comparison.
Formally:

\begin{theorem}
\label{thm:noreg-disc-strong}
Let $h$ be a $K$-strongly convex regularizer on $\act$ and let $(\temp_{n})_{n\geq1}$ be a positive nonincreasing parameter.
Then, for every sequence of payoff vectors $(\pay_{n})_{n\geq1}$ in $V^{\ast}$, the sequence of play
\begin{equation}
x_{n+1}
	=\choice_{h} \left(\temp_{n} \sum_{k=1}^{n} \pay_{k} \right)
\end{equation}
generated by the strategy $\sigma=(\sigma_n^{h,\temp_{n}})_{n\geq1}$ of \eqref{eq:AS} guarantees the bound
\begin{equation}
\label{eq:reg-disc-unbounded}
\max_{x\in \act}\reg_{n}^{\sigma,\pay}(x)
	\leq \frac{h_{\max}-h_{\min}}{\temp_{n}}
	+ \frac{1}{2K} \sum_{k=1}^{n} \temp_{k-1} \|\pay_{k}\|_{\ast}^{2},
\end{equation}
where we have set $\temp_{0} = \temp_{1}$.
In particular, if $\|\pay_{n}\|_{\ast} \leq M$ for some $M>0$, then:
\begin{equation}
\label{eq:reg-disc-bounded}
\max_{x\in \act} \reg_{n}^{\sigma,\pay}(x)
	\leq \frac{h_{\max} - h_{\min}}{\temp_{n}}
	+ \frac{M^{2}}{2K} \sum_{k=1}^{n} \temp_{k-1}. 
\end{equation}
\end{theorem}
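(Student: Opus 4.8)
The plan is to deduce the discrete-time bound from the continuous-time estimate of Theorem~\ref{thm:noreg-cont}, applied to the interpolated data \eqref{eq:interpolation} (which is admissible, since $\pay_{t}=\pay_{\ceil{t}}$ is a locally integrable step function and $\temp_{t}=\temp_{\floor{t}\vee1}$ is positive, nonincreasing and piecewise constant). The first observation is that at integer times the continuous- and discrete-time cumulative payoffs of any \emph{fixed} action coincide: since $\pay_{s}$ is constant on each $(k-1,k]$, one has $\int_{0}^{n}\braket{\pay_{s}}{x}\dd s=\sum_{k=1}^{n}\braket{\pay_{k}}{x}$, while the interpolated parameter at time $n$ equals $\temp_{\floor{n}\vee1}=\temp_{n}$. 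This lets me split the discrete regret into two pieces,
\begin{equation*}
\reg_{n}^{\sigma,\pay}(x)
	= \left[ \int_{0}^{n}\braket{\pay_{s}}{x}\dd s - \int_{0}^{n}\braket{\pay_{s}}{x_{s}^{c}}\dd s \right]
	+ \left[ \int_{0}^{n}\braket{\pay_{s}}{x_{s}^{c}}\dd s - \sum_{k=1}^{n}\braket{\pay_{k}}{x_{k}} \right],
\end{equation*}
where $(x_{s}^{c})$ is the continuous-time trajectory \eqref{eq:AS-cont} driven by the interpolated stream. The first bracket is exactly the continuous-time regret, so Theorem~\ref{thm:noreg-cont} bounds it by $(h_{\max}-h_{\min})/\temp_{n}$; the entire remaining work is to bound the discrete/continuous comparison term in the second bracket by $\tfrac{1}{2K}\sum_{k=1}^{n}\temp_{k-1}\|\pay_{k}\|_{\ast}^{2}$.

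To control the comparison term I would split the integral over the unit intervals $(k-1,k)$. Using $\pay_{s}=\pay_{k}$ there, the term collapses to $\sum_{k=1}^{n}\braket{\pay_{k}}{\int_{k-1}^{k}x_{s}^{c}\dd s-x_{k}}$, so by the dual-norm inequality it suffices to estimate $\int_{k-1}^{k}\|x_{s}^{c}-x_{k}\|\dd s$. The key structural remark is that, by the conventions in \eqref{eq:interpolation} together with $\temp_{0}=\temp_{1}$, the parameter is frozen at $\temp_{k-1}$ throughout $(k-1,k)$; moreover the discrete iterate can be written uniformly (including the case $k=1$, where $x_{1}=\choice_{h}(0)$) as $x_{k}=\choice_{h}\!\big(\temp_{k-1}\int_{0}^{k-1}\pay_{\tau}\dd\tau\big)$. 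Thus $x_{k}$ and $x_{s}^{c}=\choice_{h}\!\big(\temp_{k-1}\int_{0}^{s}\pay_{\tau}\dd\tau\big)$ are values of $\choice_{h}$ at the \emph{same} parameter, differing only through the partial increment $\int_{k-1}^{s}\pay_{\tau}\dd\tau=(s-(k-1))\pay_{k}$.

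This is where strong convexity enters, and it is the main obstacle: I need $\choice_{h}$ to be Lipschitz so that a small change in the aggregated score produces a proportionally small change in the chosen action. By Propositions~\ref{prop:choice-gradient} and~\ref{prop:strong-conv} (equivalently, Corollary~\ref{cor:strongly}), $K$-strong convexity of $h$ makes $\choice_{h}=\nabla h^{\ast}$ a $1/K$-Lipschitz map from $(V^{\ast},\|\cdot\|_{\ast})$ to $(V,\|\cdot\|)$, whence
\begin{equation*}
\|x_{s}^{c}-x_{k}\|
	\leq \frac{1}{K}\,\temp_{k-1}\,\|(s-(k-1))\pay_{k}\|_{\ast}
	= \frac{\temp_{k-1}}{K}\,(s-(k-1))\,\|\pay_{k}\|_{\ast}.
\end{equation*}
Integrating the factor $s-(k-1)$ over $(k-1,k)$ produces the constant $\tfrac12$, giving $\int_{k-1}^{k}\|x_{s}^{c}-x_{k}\|\dd s\leq\tfrac{\temp_{k-1}}{2K}\|\pay_{k}\|_{\ast}$. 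A final application of the dual-norm inequality then yields $\braket{\pay_{k}}{\int_{k-1}^{k}x_{s}^{c}\dd s-x_{k}}\leq\tfrac{\temp_{k-1}}{2K}\|\pay_{k}\|_{\ast}^{2}$.

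Summing over $k$ bounds the comparison term by $\tfrac{1}{2K}\sum_{k=1}^{n}\temp_{k-1}\|\pay_{k}\|_{\ast}^{2}$; combining this with the continuous-time estimate from the first bracket and maximizing over $x\in\act$ gives \eqref{eq:reg-disc-unbounded}. The bounded variant \eqref{eq:reg-disc-bounded} then follows immediately by substituting $\|\pay_{k}\|_{\ast}\leq M$. Beyond the Lipschitz step, the only points requiring care are the bookkeeping of the interpolation at integer times and the role of the convention $\temp_{0}=\temp_{1}$, which is precisely what makes the $k=1$ term fit the same pattern as the rest.
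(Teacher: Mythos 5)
Your proposal is correct and follows essentially the same route as the paper: interpolate the payoffs and parameter, invoke Theorem~\ref{thm:noreg-cont} for the first term, and bound the discrete/continuous gap on each interval $(k-1,k)$ via the $1/K$-Lipschitz continuity of $\choice_{h}$ (Corollary~\ref{cor:strongly}) applied to the score increment $\temp_{k-1}(s-k+1)\pay_{k}$, with the factor $\tfrac12$ coming from integrating $s-(k-1)$. The only cosmetic difference is that you integrate $\|x_{s}^{c}-x_{k}\|$ before applying the dual-norm inequality while the paper applies it pointwise inside the integral; the bookkeeping of $x_{k}=x_{k-1}^{c}$ and the convention $\temp_{0}=\temp_{1}$ is handled identically.
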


\begin{Proof}
Define the continuous-time interpolations of $\pay_{n}$ and $\temp_{n}$ as in \eqref{eq:interpolation} and let $y_{t} = \temp_{t} \int_{0}^{t} \pay_{s} \dd s$;
Then, for the continuous-time process $x_{t}^{c} = \choice_{h}\left(y_{t}\right)$ generated by \eqref{eq:AS-cont}, we will have:
\begin{equation}
x_{n}
	=\choice_{h}\left(\temp_{n-1} \sum_{k=1}^{n-1} \pay_{k}\right)
	= x_{n-1}^{c},
\end{equation}
and hence, for $k\geq1$ and $t\in(k-1,k)$, the payoffs corresponding to $x_{t}^{c}$ and $x_{k}$ will differ by at most
\begin{flalign}
\label{eq:comp1}
|\braket{\pay_{t}}{x_{t}^{c}} - \braket{\pay_{k}}{x_{k}}|
	&= |\braket{\pay_{k}}{x_{t}^{c} - x_{k-1}^{c}}|
	\notag\\
	&\leq \|\pay_{k}\|_{\ast} \|\choice_{h}(y_{t}) - \choice_{h}(y_{k-1})\|
	\leq \frac{1}{K} \|\pay_{k}\|_{\ast} \, \|y_{t} - y_{k-1}\|_{\ast},
\end{flalign}
where the last inequality follows from the $1/K$-Lipschitz continuity of $\choice_{h}$ (Corollary~\ref{cor:strongly}).
On the other hand, the definition of $y_{t}$ gives
\begin{equation}
\label{eq:comp2}
\|y_{t} - y_{k-1}\|_{\ast}
	= \left\|\temp_{k-1} \int_{k-1}^{t} \pay_{s} \dd s\right\|_*
	\leq \temp_{k-1} \|\pay_{k}\|_{\ast} (t - k + 1),
\end{equation}
which leads to the estimate:
\begin{flalign}
\label{eq:comp3}
\left\vert \int_{0}^{n} \braket{\pay_{t}}{x_{t}^{c}} - \sum_{k=1}^{n} \braket{\pay_{k}}{x_{k}} \right\vert
	&\leq \sum_{k=1}^{n} \int_{k-1}^{k} \vert \braket{\pay_{t}}{x_{t}^{c}} - \braket{\pay_{k}}{x_{k}}\vert \dd t
	\notag\\
	&\leq \frac{1}{K} \sum_{k=1}^{n} \temp_{k-1} \|\pay_{k}\|_{\ast}^{2} \int_{k-1}^{k} (t - k + 1) \dd t
	\notag\\
	&= \frac{1}{2K} \sum_{k=1}^{n} \temp_{k-1} \|\pay_{k}\|_{\ast}^{2}.
\end{flalign}
In view of this discrete/continuous comparison, we thus obtain:
\begin{flalign}
\label{eq:comp4}
\max_{x\in\act} \sum_{k=1}^{n} \braket{\pay_{k}}{x}
	&= \max_{x\in\act} \int_{0}^{t} \braket{\pay_{t}}{x} \dd t
	\notag\\
	&\leq \int_{0}^{n} \braket{\pay_{t}}{x_{t}^{c}} \dd t
	+ \frac{h_{\max} - h_{\min}}{\temp_{n}}
	\notag\\
	&\leq \sum_{k=1}^{n} \braket{\pay_{k}}{x_{k}}
	+ \frac{1}{2K} \sum_{k=1}^{n} \temp_{k-1} \|\pay_{k}\|_{\ast}^{2}
	+ \frac{h_{\max} - h_{\min}}{\temp_{n}},
\end{flalign}
where the first inequality follows from Theorem \ref{thm:noreg-cont} and the last one from \eqref{eq:comp3};
the bounds \eqref{eq:reg-disc-unbounded} and \eqref{eq:reg-disc-bounded} are then immediate.
\end{Proof}

To get the optimal dependence of the bound \eqref{eq:reg-disc-bounded} in $n$, both terms should scale as $\sqrt{n}$ (otherwise, one would be slower than the other).
In this case, we get a bound for the average regret which vanishes as $\bigoh(n^{-1/2})$:

\begin{corollary}
\label{cor:noreg-disc-root}
Let $(\pay_{n})_{n\geq1}$ be a sequence of payoff vectors in $V^{\ast}$.
Then, with notation as in Theorem \ref{thm:noreg-disc-strong},
the sequence of play
\begin{equation}
x_{n+1}
	= \choice_{h}\left(\sqrt{\frac{K (h_{\max} - h_{\min})}{M^{2} n}} \sum_{k=1}^{n} \pay_{k}\right)
\end{equation}
guarantees the regret bound:
\begin{equation}
\label{eq:reg-disc-root}
\max_{x\in\act} \reg_{n}^{\sigma,\pay}(x)
	\leq 2 M\sqrt{\frac{h_{\max} - h_{\min}}{K}}\left( \frac{1}{4} + \sqrt{n} \right).
\end{equation}
\end{corollary}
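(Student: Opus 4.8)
The plan is to specialize the bound \eqref{eq:reg-disc-bounded} of Theorem \ref{thm:noreg-disc-strong} to the parameter prescribed by the statement. Writing $\delta_{h} = h_{\max} - h_{\min}$ and $C = \sqrt{K\delta_{h}/M^{2}}$, the displayed sequence of play is exactly the one generated by \eqref{eq:AS} with $\temp_{n} = C/\sqrt{n}$ (a positive nonincreasing parameter), together with the convention $\temp_{0} = \temp_{1} = C$ of Theorem \ref{thm:noreg-disc-strong}. Since $\|\pay_{n}\|_{\ast} \leq M$ is assumed, \eqref{eq:reg-disc-bounded} applies verbatim, so it suffices to evaluate its two summands for this choice of $\temp_{n}$ and add them.

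For the first summand I would substitute $\temp_{n} = C/\sqrt{n}$ and simplify $C$, obtaining $\delta_{h}/\temp_{n} = \delta_{h}\sqrt{n}/C = M\sqrt{\delta_{h}/K}\,\sqrt{n}$. For the second summand, note that $\temp_{k-1} = C/\sqrt{k-1}$ for $k\geq 2$ while $\temp_{0} = C$, so that $\sum_{k=1}^{n}\temp_{k-1} = C\big(1 + \sum_{j=1}^{n-1} j^{-1/2}\big)$. The only estimate with any content is the standard integral comparison $\sum_{j=1}^{n-1} j^{-1/2} \leq \int_{0}^{n-1} x^{-1/2}\,dx = 2\sqrt{n-1} \leq 2\sqrt{n}$, valid because $x^{-1/2}$ is decreasing. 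This gives $\sum_{k=1}^{n}\temp_{k-1} \leq C(1 + 2\sqrt{n})$, and hence, using the value of $C$ once more, $\frac{M^{2}}{2K}\sum_{k=1}^{n}\temp_{k-1} \leq \frac{M^{2}C}{2K}(1 + 2\sqrt{n}) = \frac{M}{2}\sqrt{\delta_{h}/K}\,(1 + 2\sqrt{n})$.

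Adding the two contributions yields $2M\sqrt{\delta_{h}/K}\,\sqrt{n} + \frac{1}{2}M\sqrt{\delta_{h}/K}$, which factors as $2M\sqrt{\delta_{h}/K}\,(\tfrac14 + \sqrt{n})$, i.e.\ precisely the claimed bound \eqref{eq:reg-disc-root}. I do not expect a genuine obstacle here: as the paragraph preceding the corollary indicates, the exponent and constant arise simply by choosing $\temp_{n}$ so that the two terms of \eqref{eq:reg-disc-bounded} both scale like $\sqrt{n}$, and this particular $C$ is the value that balances them. The only points demanding care are the index shift (working with $\temp_{k-1}$ and the convention $\temp_{0}=\temp_{1}$ rather than $\temp_{k}$) and selecting the form of the integral bound so that the residual constant collapses cleanly into the factor $\tfrac14 + \sqrt{n}$ rather than a messier additive remainder.
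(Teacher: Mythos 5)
Your proposal is correct and follows essentially the same route as the paper: both substitute $\temp_{n} = M^{-1}\sqrt{K\delta_{h}}/\sqrt{n}$ into the bound \eqref{eq:reg-disc-bounded}, estimate $\sum_{k=1}^{n}\temp_{k-1}$ via the integral comparison $\sum_{j=1}^{n-1}j^{-1/2}\leq\int_{0}^{n-1}t^{-1/2}\,dt\leq 2\sqrt{n}$, and combine the two terms into the stated factorized form. Nothing is missing.
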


\begin{Proof}
Set $\delta_h = h_{\max} - h_{\min}$ and $\temp_{n} = \temp/\sqrt{n}$ with $\temp = M^{-1}\sqrt{K \delta_h}$.
Then:
\begin{equation}
\sum_{k=1}^{n} \temp_{k-1}
	= \temp + \temp \sum_{k=1}^{n-1} \frac{1}{\sqrt{k}}
	\leq \temp + \temp \int_{0}^{n-1} \frac{1}{\sqrt{t}} \dd t
	\leq \temp \left(1 + 2\sqrt{n}\right),
\end{equation}
so the bound \eqref{eq:reg-disc-bounded} becomes:
\begin{equation*}
\frac{\delta_h}{\temp_{n}} + \frac{M^{2}}{2K} \sum_{k=1}^{n} \temp_{k-1}
	\leq \frac{\delta_h}{\temp} \sqrt{n} + \frac{M^{2}\temp}{2K} \left(1 + 2 \sqrt{n}\right)
	= 2M\sqrt{\frac{\delta_h}{K}}\left(\frac{1}{4} + \sqrt{n}\right).
\end{equation*}
\end{Proof}

\begin{remark}
We should stress here that regret guarantees of the same order as \eqref{eq:reg-disc-root} can be obtained for the \ac{OMD}/\ac{FTRL} family of algorithms by optimizing the choice of parameter over a finite learning horizon and then restarting the algorithm every so often, using the doubling trick (\citemor{CBFH+97}, \citemor{Vov95}) to guarantee a sublinear regret bound in the long run.
The doubling trick may thus be seen as a special case of a nonincreasing parameter;
for the general case, the bounds \eqref{eq:reg-disc-unbounded}/\eqref{eq:reg-disc-bounded} describe in a precise way the impact of the variability of $\temp_{n}$ on the method's regret guarantees (see also Section \ref{sec:previous} for a more detailed discussion).
\end{remark}

\begin{remark}
The dependence of $\temp$ on $\delta_h$, $K$ and $M$ in \eqref{eq:reg-disc-root} has been chosen precisely so as to minimize the expression $\left(\delta_h/\temp + M^{2}\temp/K\right)$ over all $\temp>0$.
\end{remark}

\begin{remark}
[On the dependence on $K$ and the choice of optimal $h$]
\label{rem:choice-h}
The dependence of the bound \eqref{eq:reg-disc-root} on $K$ is clearly artificial:
\eqref{eq:reg-disc-root} remains invariant if $h$ is rescaled by a positive constant, so it suffices to consider regularizer functions that are $1$-strongly convex over $\act$.
This then leads to the following question:
\emph{given a norm $\|\cdot\|$ on $V$ and a compact convex subset $\act\subset V$, which $1$-strongly convex function minimizes $h_{\max} - h_{\min}$?}
With the exception of the Euclidean norm, this question does not seem to admit a trivial answer (cf. Section~\ref{sec:optimal-choice-h} for a more detailed discussion).
\end{remark}

By expressing the cumulative payoff gap between discrete- and continuous-time \emph{exactly}, Theorem \ref{thm:noreg-disc-strong} can be extended further to regularizer functions that are not strongly convex over $\act$.
The only thing that changes in this case is that the comparison term of the bound \eqref{eq:reg-disc-bounded} is replaced by a term involving the Bregman divergence associated with the convex conjugate
$h^{\ast}$ of $h$.

The following result is a variable-parameter extension of Theorem 5.6 in \citemor{bubeck2012regret}.
\begin{theorem}
\label{thm:noreg-disc-precise}
Let $h$ be a regularizer function on $\act$.
Then, with notation as in Theorem \ref{thm:noreg-disc-strong}, the strategy $\sigma = (\sigma^{h,\temp_{n}}_n)_{n\geq1}$ of \eqref{eq:AS} guarantees the regret bound:
\begin{equation}
\label{eq:reg-disc-precise}
\max_{x\in\act} \reg_{n}^{\sigma,\pay}(x)
	\leq \frac{h_{\max} - h_{\min}}{\temp_{n}}
	+ \sum_{k=1}^{n} \frac{1}{\temp_{k-1}} D_{h^{\ast}} (y_{k}^{-},y_{k-1}^{+}),
\end{equation}
where we have set $y_{n}^{+} = \temp_{n} \sum_{k=1}^{n} \pay_{k}$, $y_{n}^{-} = \temp_{n-1} \sum_{k=1}^{n} \pay_{k}$ and $\temp_{0} = \temp_{1}$.
\end{theorem}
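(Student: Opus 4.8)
The plan is to reuse the skeleton of the proof of Theorem~\ref{thm:noreg-disc-strong}, but to replace the Lipschitz estimate of the discrete/continuous payoff gap by an \emph{exact} evaluation. Concretely, I would define the continuous-time interpolations $\pay_t$ and $\temp_t$ exactly as in \eqref{eq:interpolation}, set $y_t = \temp_t \int_0^t \pay_s \dd s$, and run the continuous-time process $x_t^c = \choice_h(y_t)$. On each elementary interval $(k-1,k)$ the interpolated parameter is frozen at $\temp_{k-1}$ and $\pay_t \equiv \pay_k$, so $y_t$ is affine in $t$ and travels along the segment joining $y_{k-1}^+ = \temp_{k-1}\sum_{j=1}^{k-1}\pay_j$ (its right limit at $t=k-1$) to $y_k^- = \temp_{k-1}\sum_{j=1}^{k}\pay_j$ (its left limit at $t=k$); in particular $\dot y_t = \temp_{k-1}\pay_k = y_k^- - y_{k-1}^+$. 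The boundary interval $(0,1)$ is accommodated by the convention $\temp_0 = \temp_1$, which makes these identities valid for $k=1$ as well, and one checks as in Theorem~\ref{thm:noreg-disc-strong} that $x_k = \choice_h(y_{k-1}^+)$.

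The crux is the per-interval integration. Since $\nabla h^\ast = \choice_h$ by Proposition~\ref{prop:choice-gradient}, the chain rule gives $\frac{d}{dt} h^\ast(y_t) = \braket{\dot y_t}{\nabla h^\ast(y_t)} = \temp_{k-1}\braket{\pay_k}{x_t^c}$ on $(k-1,k)$, so the fundamental theorem of calculus yields
\begin{equation*}
\int_{k-1}^{k} \braket{\pay_k}{x_t^c} \dd t
	= \frac{1}{\temp_{k-1}}\bigl(h^\ast(y_k^-) - h^\ast(y_{k-1}^+)\bigr).
\end{equation*}
On the other hand $x_k = \nabla h^\ast(y_{k-1}^+)$ and $\pay_k = \temp_{k-1}^{-1}(y_k^- - y_{k-1}^+)$, whence $\braket{\pay_k}{x_k} = \temp_{k-1}^{-1}\braket{y_k^- - y_{k-1}^+}{\nabla h^\ast(y_{k-1}^+)}$. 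Subtracting and reading off the definition \eqref{eq:Breg-divergence} of the Bregman divergence of $h^\ast$ gives the exact per-step identity
\begin{equation*}
\int_{k-1}^{k} \braket{\pay_t}{x_t^c}\dd t - \braket{\pay_k}{x_k}
	= \frac{1}{\temp_{k-1}} D_{h^\ast}(y_k^-, y_{k-1}^+).
\end{equation*}

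Summing over $k = 1,\dots,n$ produces an exact expression for the total discrete/continuous payoff gap. Since $\pay_t$ is piecewise constant, $\max_{x\in\act}\sum_{k=1}^n \braket{\pay_k}{x} = \max_{x\in\act}\int_0^n \braket{\pay_t}{x}\dd t$, and applying the continuous-time bound of Theorem~\ref{thm:noreg-cont} at $t=n$ (where $\temp_t = \temp_n$) I would bound this maximum by $\int_0^n \braket{\pay_t}{x_t^c}\dd t + (h_{\max}-h_{\min})/\temp_n$; substituting the exact gap and rearranging then gives \eqref{eq:reg-disc-precise}.

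I expect the main delicate point to be the legitimacy of the exact integration rather than any single hard estimate: one must use that $h^\ast$ is \emph{everywhere} (continuously) differentiable on $V^\ast$ by Proposition~\ref{prop:choice-gradient}, and that although $y_t$ jumps at the integer times (because $\temp_t$ does), it is affine and continuous on each open interval with the prescribed one-sided limits, so integrating over the open intervals separately is harmless. As a consistency check, if $h$ is $K$-strongly convex then $h^\ast$ is $1/K$-strongly smooth (Proposition~\ref{prop:strong-conv}), giving $D_{h^\ast}(y_k^-,y_{k-1}^+) \leq \tfrac{1}{2K}\|y_k^- - y_{k-1}^+\|_\ast^2 = \tfrac{1}{2K}\temp_{k-1}^2\|\pay_k\|_\ast^2$, so \eqref{eq:reg-disc-precise} collapses to the bound \eqref{eq:reg-disc-unbounded}, confirming that this theorem genuinely generalizes Theorem~\ref{thm:noreg-disc-strong}.
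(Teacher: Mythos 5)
Your proposal is correct and follows essentially the same route as the paper's own proof: the same continuous-time interpolation, the same exact per-interval integration of $\tfrac{d}{dt}h^{\ast}(y_{t})$ via $\nabla h^{\ast}=\choice_{h}$ yielding the identity $\int_{k-1}^{k}\braket{\pay_{t}}{x_{t}^{c}}\dd t-\braket{\pay_{k}}{x_{k}}=\temp_{k-1}^{-1}D_{h^{\ast}}(y_{k}^{-},y_{k-1}^{+})$, and the same final appeal to Theorem \ref{thm:noreg-cont}. Your closing consistency check (recovering \eqref{eq:reg-disc-unbounded} via strong smoothness of $h^{\ast}$) is a nice addition not spelled out in the paper, but the argument itself is the paper's.
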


\begin{Proof}
With notation as in the proof of Theorem \ref{thm:noreg-disc-strong}, the variables $y_{n}^{\pm}$ in the statement of the theorem may be expressed more concisely as:
\begin{equation}
y_{n}^{\pm}
	= \lim_{t\to n^{\pm}} y_{t}
	= \lim_{t\to n^{\pm}} \temp_{t} \int_{0}^{t} \pay_{s} \dd s,
\end{equation}
and hence, with $\temp_{t}$ right-continuous, we get $x_{n} = \choice_{h}(y_{n-1}) = \choice_{h}(y_{n-1}^{+})$.
Accordingly, if $x_{t}^{c} = \choice_{h}(y_{t})$ denotes the continuous-time process generated by \eqref{eq:AS-cont}, then, for all $k\geq1$ and for all $t\in(k-1,k)$, we will have:
\begin{equation}
\braket{\pay_{t}}{x_{t}^{c}} - \braket{\pay_{k}}{x_{k}}
	= \braket{\pay_{t}}{\choice_{h}(y_{t})} - \braket{\pay_{k}}{\choice_{h}(y_{k-1}^{+})}
	= \braket{\pay_{k}}{\grad h^{\ast}(y_{t})} - \braket{\pay_{k}}{\grad h^{\ast}(y_{k-1}^{+})}.
\end{equation}
In this way, noting that $\braket{\pay_{t}}{\grad h^{\ast}(y_{t})}$ is simply the derivative of $h^{\ast}(y_{t})/\temp_{k-1}$ for $t\in(k-1,k)$, we obtain the following comparison over $(k-1,k)$:
\begin{flalign}
\int_{k-1}^{k} \braket{\pay_{t}}{x_{t}^{c}} \dd t - \braket{\pay_{k}}{x_{k}}
	&= \int_{k-1}^{k} \frac{1}{\temp_{k-1}} \frac{d}{dt} \left(h^{\ast}(y_{t}) \right) \dd t
	- \frac{1}{\temp_{k-1}} \braket{\temp_{k-1} \pay_{k}}{\grad h^{\ast}(y_{k-1}^{+})}
	\notag\\
	&= \frac{1}{\temp_{k-1}} \left( h^{\ast}(y_{k}^{-}) - h^{\ast}(y_{k-1}^{+}) - \braket{y_{k}^{-} - y_{k-1}^{+}}{\grad h^{\ast}(y_{k-1}^{+})}\right)
	\notag\\
	& = \frac{1}{\temp_{k-1}} D_{h^{\ast}}\left( y_{k}^{-},y_{k-1}^{+} \right).
\end{flalign}
In view of the above, the claim follows by summing this bound over $k=1,\dotsc,n$ and plugging the resulting expression in the first inequality of \eqref{eq:comp4} \textendash\ which holds independently of any assumptions on $h$.
%
\end{Proof}




\section{Links with Existing Results}
\label{sec:previous}

In this section, we discuss how certain existing results in online optimization and (stochastic) convex programming can be obtained as corollaries of the general analysis of the previous sections.

\subsection{Links with Known Online Optimization Algorithms}

\subsubsection{The Exponential Weight Algorithm}
\label{sec:EW}

The \acf{EW} algorithm was introduced independently by \citemor{LW94} and \citemor{Vov90} as a learning strategy in discrete time.
Motivated by the approach of \citemor{Sor09} who used a continuous-time variant to retrieve the algorithm's classical regret bounds, we show here how the same bounds can be obtained directly from Theorem \ref{thm:noreg-disc-strong}.

The framework of the \ac{EW} algorithm is that of randomized action selection as in Section \ref{sec:case-simpl-rand}.
Specifically, let $\set = \{1,\dotsc,d\}$ be a finite set of \emph{pure} actions, and let the agent's action set be the unit simplex $\act =\simplex_{d}$ of $\R^{d}$ \textendash\ the latter being endowed with the $\ell^{1}$ norm $\|\cdot\|_{1}$.
In this context, the \ac{EW} algorithm is defined as:
\begin{equation}
\label{eq:EW}
\tag{\acs{EW}}
\begin{aligned}
\score_{n}
	&= \score_{n-1} + \pay_{n},
	\\
x_{i,n+1}
	&= \frac{e^{\temp\score_{i,n}}}{\sum_{j=1}^{d} e^{\temp \score_{j,n}}}
\end{aligned}
\end{equation}
where $\temp>0$ is a (fixed) parameter and $(\pay_{n})_{n\geq1}$ is a sequence of payoff vectors in $[-1,1]^{d}$ (so that $\|\pay_{n}\|_{\infty} \leq 1$ in the induced dual norm).

Example \ref{ex:entropy} in Section \ref{sec:choice} shows that \eqref{eq:EW} corresponds to \eqref{eq:AS} with $\temp_{n} = \temp$ and $h(x) = \sum_{i=1}^{d} x_{i} \log x_{i}$.
Since $h_{\max} - h_{\min} = \log d$ and $h$ is $1$-strongly convex
with respect to $\|\cdot\|_{1}$ (cf. Proposition~\ref{prop:entropy-eucl-strong}), Theorem~\ref{thm:noreg-disc-strong} readily yields the bound
\begin{equation}
\label{eq:reg-bound-EW}
\max_{a\in\set} \reg_{n}(a)
	\leq \frac{\log d}{\temp} + \frac{n\temp}{2}.
\end{equation}
Additionally, if the time horizon $n$ is known in advance, the optimal
parameter choice $\temp =\sqrt{2 \log d/n}$ leads to
\begin{equation}
\label{eq:reg-bound-EW-finite}
\max_{\alpha\in\set} \reg_{n}(a)
	\leq \sqrt{2n\log d},
\end{equation}
which, as far as the dependence on $d$ and $n$ is concerned, is the best possible bound a strategy can guarantee in this framework \textendash\ see e.g. \citemor[Theorem~3.7]{CBL06}.

\begin{remark}
  By taking $\pay_{n}\in[0,1]^d$ (as is often the case in the
  literature) and then shifting to $[-1/2,1/2]^{d}$,
  Theorem~\ref{thm:noreg-disc-strong} can be applied with $M=1/2$.
  This yields a factor of $1/8$ in the second term of
  \eqref{eq:reg-bound-EW} and leads to the bound obtained by
  \citemor{CB97} and \citemor{CBL06}.
\end{remark}

\subsubsection{The Exponential Weight Algorithm with $\temp_{n} = 1/\sqrt{n}$}
\label{sec:EW'}

\citemor{ACBG02} considered the following variant of \eqref{eq:EW}
\begin{equation}
\label{eq:EW'}
\tag{\acs{EW}$'$}
\begin{aligned}
\score_{n}
	&= \score_{n-1} + \pay_{n},
	\\
x_{i,n+1}
	&= \frac{e^{\temp \score_{i,n}/\sqrt{n}}}{\sum_{j=1}^{d} e^{\temp \score_{j,n}/\sqrt{n}}}.
\end{aligned}
\end{equation}
In our context, a direct application of Corollary \ref{cor:noreg-disc-root} with $M=K=1$ then gives
\begin{equation}
\label{eq:reg-bound-EW'}
\max_{a\in\set} \reg_{n}(a)
	\leq 2 \sqrt{ n \log d}+\frac{1}{2}\sqrt{\log d},
\end{equation}
a bound which, unlike \eqref{eq:reg-bound-EW-finite}, has the advantage of holding uniformly in time.

\subsubsection{Smooth Fictitious Play}
\label{sec:SFP}

The \acf{SFP} process was introduced by \citemor{FL95} (see also
\citemor{FL98} and \citemor{FL99}), and its regret properties were examined further by \citemor{BHS06} using the theory of stochastic approximation \textendash\ but without providing any quantitative bounds for the regret.

Just like the \ac{EW} algorithm, \ac{SFP} falls within the randomized actions framework of Section \ref{sec:case-simpl-rand}.
In particular, \ac{SFP} corresponds to the sequence of play generated by \eqref{eq:AS} for an arbitrary regularizer on $\simplex_{d}$ and with parameter $\temp/n$ for some $\temp>0$ ;
specifically:
\begin{equation}
\label{eq:SFP}
\tag{\acs{SFP}}
x_{n+1} = \choice_{h}\left(\frac{\temp}{n} \sum_{k=1}^{n} \pay_{k}\right).
\end{equation}
With regards to the regret induced by \eqref{eq:SFP}, \citemor[Theorem 6.6]{BHS06} show that for every $\eps>0$, there exists some $\temp^{\ast} \equiv \temp^{\ast}(\eps)$ such that the strategy \eqref{eq:SFP} with parameter $\temp\geq\temp^{\ast}$ leads to $\eps$-realized-regret.
On the other hand, combining Proposition~\ref{prop:realized-expected} with Theorem~\ref{thm:noreg-disc-strong} yields the following more precise statement:

\begin{proposition}
\label{prop:SFP}
Let $h$ be a $K$-strongly convex regularizer on the unit simplex $\simplex_{d}\subset\R^{d}$ endowed with the $\ell^{1}$ norm.
Then, for every sequence of payoff vectors $(\pay_{n})_{n\geq1}$ in $[-1,1]^{d}$, the strategy \eqref{eq:SFP} with parameter $\temp>0$ guarantees
\begin{equation}
\label{eq:reg-bound-SFP}
\max_{a\in\set} \reg_{n}(a)
	\leq \frac{h_{\max} - h_{\min}}{\temp} n
	+ \frac{\temp \log n}{2K}
	+\frac{\temp}{K}.
\end{equation}
In particular, \eqref{eq:SFP} with parameter $\temp$ leads to $(h_{\max} - h_{\min})/\temp$ \textup(realized\textup) regret.
\end{proposition}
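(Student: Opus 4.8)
The plan is to read off \eqref{eq:SFP} as the special case of the general strategy \eqref{eq:AS} obtained by taking the time-varying parameter $\temp_{n} = \temp/n$, and then to invoke the discrete-time bound of Theorem~\ref{thm:noreg-disc-strong} verbatim. Since the simplex $\simplex_{d}$ carries the $\ell^{1}$ norm, the relevant dual norm on $\R^{d}$ is $\|\cdot\|_{\infty}$; as the payoff vectors lie in $[-1,1]^{d}$ we have $\|\pay_{n}\|_{\infty}\leq 1$, so Theorem~\ref{thm:noreg-disc-strong} applies with $M=1$ and gives
\begin{equation*}
\max_{x\in\simplex_{d}}\reg_{n}^{\sigma,\pay}(x)
	\leq \frac{h_{\max}-h_{\min}}{\temp_{n}}
	+ \frac{1}{2K}\sum_{k=1}^{n}\temp_{k-1},
\end{equation*}
with the convention $\temp_{0}=\temp_{1}$. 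Because $\reg_{n}^{\sigma,\pay}(x)$ is affine in $x$, its maximum over $\simplex_{d}$ is attained at a vertex, so the left-hand side equals $\max_{a\in\set}\reg_{n}(a)$.

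The core of the argument is then a short evaluation of the two terms under $\temp_{n}=\temp/n$. The first term is immediate: $(h_{\max}-h_{\min})/\temp_{n} = (h_{\max}-h_{\min})\,n/\temp$, which is the leading term of \eqref{eq:reg-bound-SFP}. For the second term I would unwind the sum using $\temp_{0}=\temp_{1}=\temp$ and $\temp_{k-1}=\temp/(k-1)$ for $k\geq 2$, obtaining
\begin{equation*}
\sum_{k=1}^{n}\temp_{k-1}
	= \temp + \temp\sum_{j=1}^{n-1}\frac{1}{j}
	\leq \temp\bigl(2 + \log n\bigr),
\end{equation*}
where the inequality uses the standard harmonic-sum estimate $\sum_{j=1}^{n-1}1/j \leq 1 + \log n$. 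Multiplying by $1/(2K)$ splits precisely into $\temp\log n/(2K) + \temp/K$, which are exactly the two remaining terms of \eqref{eq:reg-bound-SFP}.

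For the ``in particular'' assertion I would divide \eqref{eq:reg-bound-SFP} by $n$ and let $n\to\infty$: the summands $\temp\log n/(2Kn)$ and $\temp/(Kn)$ vanish, so
\begin{equation*}
\limsup_{n\to\infty}\frac{1}{n}\max_{x\in\simplex_{d}}\reg_{n}^{\sigma,\pay}(x)
	\leq \frac{h_{\max}-h_{\min}}{\temp},
\end{equation*}
i.e. \eqref{eq:SFP} leads to $(h_{\max}-h_{\min})/\temp$-regret in the sense of Definition~\ref{def:regret}. Transferring this expected (linear) regret bound to the \emph{realized} regret is then immediate from Proposition~\ref{prop:realized-expected}, since the bound holds with respect to the uniform norm. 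I do not expect a genuine obstacle here; the only point demanding care is the bookkeeping of the shifted index together with the convention $\temp_{0}=\temp_{1}$, which must be handled so that the harmonic-sum estimate reproduces the exact constants $\temp\log n/(2K)+\temp/K$ of \eqref{eq:reg-bound-SFP} rather than a looser bound.
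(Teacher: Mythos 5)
Your proposal is correct and follows essentially the same route as the paper: apply Theorem~\ref{thm:noreg-disc-strong} with $\temp_{n}=\temp/n$ and $M=1$ (for the $\ell^{\infty}$ dual norm), bound $\sum_{k=1}^{n}\temp_{k-1}$ via the harmonic-sum estimate $\sum_{j=1}^{n-1}j^{-1}\leq 1+\log n$ together with the convention $\temp_{0}=\temp_{1}$, and pass to realized regret via Proposition~\ref{prop:realized-expected}. Your bookkeeping reproduces the constants $\temp\log n/(2K)+\temp/K$ exactly as in \eqref{eq:reg-bound-SFP}.
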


\begin{Proof}
Simply combine the logarithmic growth estimate $\sum_{k=1}^{n} k^{-1} < 1+\log n$ for the harmonic series and Theorem \ref{thm:noreg-disc-strong} with $\temp_{n} = \temp/n$;
the claim for the realized regret then follows from Proposition \ref{prop:realized-expected}.
\end{Proof}

\begin{remark}
  It should be noted here that the qualitative analysis of
  \citemor{BHS06} does not require $h$ to be strongly convex; that said,
  if $h$ is strongly convex, Proposition \ref{prop:SFP} gives a
  quantitative bound on the regret.
\end{remark}

\subsubsection{Vanishingly Smooth Fictitious Play}
\label{sec:VSFP}

The variant of \ac{SFP} known as \acf{VSFP} was introduced by \citemor{BF12}, and its regret properties were established using sophisticated tools from the theory of differential inclusions and stochastic approximation \textendash\ but, again, without providing explicit regret bounds.

Using the same notation as before, \ac{VSFP} corresponds to the sequence of play
\begin{equation}
\label{eq:VSFP}
\tag{\acs{VSFP}}
 x_{n+1}
 	=\choice_{h}\left(\temp_{n}\sum_{k=1}^{n} \pay_{k}\right),
\end{equation}
where $h$ is a strongly convex regularizer on $\simplex_{d}$ and the sequence $\temp_{n}$ satisfies:

\begin{enumerate}[(\textup{A}1)]
\addtolength{\itemsep}{.5ex}
\item
\label{itm:A1}
$\lim_{n\to\infty} n \temp_{n} = +\infty$.
\item
\label{itm:A2}
$\temp_{n} = \bigoh(n^{-\alpha})$ for some $\alpha>0$.
\end{enumerate}
Under these assumptions, the main result of \citemor{BF12} is that \eqref{eq:VSFP} leads to no realized regret;
in our framework, this follows directly from Proposition \ref{prop:realized-expected} and Theorem~\ref{thm:noreg-disc-strong} (which also gives a quantitative regret guarantee):

\begin{proposition}
\label{prop:VSFP}
With notation as in Proposition \ref{prop:SFP},
the strategy \eqref{eq:VSFP} with $\temp_{n}$ satisfying assumptions \textup{(A1)} and \textup{(A2)} guarantees the regret bound
\begin{equation}
\label{eq:reg-bound-VSFP-general}
\max_{a\in\set} \frac{1}{n} \reg_{n}(a)
	\leq \frac{h_{\max} - h_{\min}}{n\temp_{n}}
	+ \frac{1}{2nK} \sum_{k=1}^{n} \temp_{k-1},
\end{equation}
and thus leads to no regret.
In particular, if $\temp_{n} = \temp n^{-\alpha}$ for some $\alpha\in(0,1)$, then:
\begin{equation}
\label{eq:reg-bound-VSFP}
\max_{a\in\set} \frac{1}{n} \reg_{n}(a)
	\leq \frac{h_{\max}-h_{\min}}{\temp n^{1-\alpha}}
        + \frac{\temp n^{-\alpha}}{2 (1-\alpha) K}
        + \frac{\temp}{2Kn}.
\end{equation}
\end{proposition}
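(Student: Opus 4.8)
The plan is to specialize Theorem~\ref{thm:noreg-disc-strong} to the simplex and then extract the asymptotics from the two resulting terms. Since the action set is $\act = \simplex_{d}$ equipped with the $\ell^{1}$ norm, the induced dual norm is $\|\cdot\|_{\infty}$, so the hypothesis $\pay_{n}\in[-1,1]^{d}$ is exactly $\|\pay_{n}\|_{\infty}\leq 1$; I would therefore invoke the bounded-payoff estimate \eqref{eq:reg-disc-bounded} with $M=1$. Because $h$ is $K$-strongly convex with respect to $\|\cdot\|_{1}$ by hypothesis, this immediately gives
\begin{equation*}
\max_{x\in\act}\reg_{n}^{\sigma,\pay}(x)
	\leq \frac{h_{\max}-h_{\min}}{\temp_{n}}
	+ \frac{1}{2K}\sum_{k=1}^{n}\temp_{k-1},
\end{equation*}
and dividing through by $n$ produces \eqref{eq:reg-bound-VSFP-general} verbatim (recall the standing convention that $\temp_{n}$ is positive and nonincreasing, so that Theorem~\ref{thm:noreg-disc-strong} applies, and that $\temp_{0}=\temp_{1}$).

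Next I would show that both terms of \eqref{eq:reg-bound-VSFP-general} vanish as $n\to\infty$. The first term $(h_{\max}-h_{\min})/(n\temp_{n})$ tends to $0$ directly by assumption~(A1), since $n\temp_{n}\to+\infty$. For the second term, note that assumption~(A2) forces $\temp_{n}\to 0$ (as $\alpha>0$); the key step is then the elementary Ces\`aro fact that if a sequence converges to $0$ so do its running averages, whence $\frac{1}{n}\sum_{k=1}^{n}\temp_{k-1}\to 0$. Combining the two, $\max_{a\in\set}\frac{1}{n}\reg_{n}(a)\to 0$, i.e. the strategy leads to no (expected) regret in the sense of Definition~\ref{def:regret}; passing to \emph{realized} regret is then a direct application of Proposition~\ref{prop:realized-expected}. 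This Ces\`aro step is really the only non-mechanical point, and it is entirely routine.

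Finally, for the explicit rate with $\temp_{n}=\temp\,n^{-\alpha}$, $\alpha\in(0,1)$, I would simply evaluate the two terms. The first becomes $(h_{\max}-h_{\min})/(\temp\,n^{1-\alpha})$ by direct substitution. For the second, I would isolate the boundary contribution $\temp_{0}=\temp_{1}=\temp$ and bound the remaining sum by an integral, using that $t\mapsto t^{-\alpha}$ is decreasing:
\begin{equation*}
\sum_{k=1}^{n}\temp_{k-1}
	= \temp + \temp\sum_{j=1}^{n-1} j^{-\alpha}
	\leq \temp + \temp\int_{0}^{n-1} t^{-\alpha}\dd t
	\leq \temp + \frac{\temp\,n^{1-\alpha}}{1-\alpha}.
\end{equation*}
Dividing by $2nK$ yields exactly the last two summands $\temp\,n^{-\alpha}/(2(1-\alpha)K)$ and $\temp/(2Kn)$ of \eqref{eq:reg-bound-VSFP}, which completes the argument. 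No genuine obstacle arises here: once Theorem~\ref{thm:noreg-disc-strong} is in hand, everything is a specialization to the simplex plus two standard summation estimates, and the only point requiring a moment's thought is the Ces\`aro averaging used to establish the qualitative no-regret claim under the weak hypotheses (A1)--(A2).
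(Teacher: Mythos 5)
Your proposal is correct and follows essentially the same route as the paper: specialize Theorem~\ref{thm:noreg-disc-strong} to the simplex with $M=1$, deduce the no-regret property from (A1) and (A2) (the paper leaves the Ces\`aro step implicit where you spell it out), and use the same integral comparison $\sum_{j=1}^{n-1}j^{-\alpha}\leq\int_{0}^{n-1}t^{-\alpha}\dd t$ for the explicit rate. Your version is in fact slightly more careful, as it keeps the factor $\temp$ in the summation estimate that the paper's displayed equation drops.
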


\begin{Proof}
The bound \eqref{eq:reg-bound-VSFP-general} is an immediate corollary of Theorem \ref{thm:noreg-disc-strong};
the no-regret property then follows from Assumptions $(\textup{A}1)$ and $(\textup{A}2)$.
Finally, if $\temp_{n} = \temp n^{-\alpha}$, we get
\begin{equation}
 \sum_{k=1}^n\eta_{k-1}
 	= 1 + \sum_{k=1}^{n-1} k^{-\alpha}
	\leq 1 + \int_{0}^{n-1} t^{-\alpha} \dd t
	= 1 + \frac{n^{1-\alpha}}{1-\alpha},
\end{equation}
and \eqref{eq:reg-bound-VSFP} follows by substituting the above in \eqref{eq:reg-bound-VSFP-general}.
\end{Proof}

\begin{remark}
  If we take $h(x) = \sum_{i=1}^dx_i\log x_i$ and $\alpha = 1/2$,
  \eqref{eq:VSFP} boils down to \eqref{eq:EW'}; the bound
  \eqref{eq:reg-bound-EW'} then also follows from
  \eqref{eq:reg-bound-VSFP}.
\end{remark}

\subsubsection{Online Gradient Descent}
\label{sec:OGD}

The \acf{OGD} algorithm was introduced by \citemor{Zin03} in the context of online convex optimization that we described in Section \ref{sec:model-OCO} \textendash\ see also \citemor[Section 4.1]{Bub11}.
Here, we focus on a so-called \emph{lazy} variant (\citemor[p.~144]{SS11}) defined by means of the recursion
\begin{equation}
\label{eq:OGD}
\tag{OGD-L}
\begin{aligned}
\score_{n}
	&\in \score_{n-1} - \temp\,\pd \loss_{n}(x_{n}),
	\\
x_{n+1}
	&= \argmin_{x\in \act} \|x - \score_{n}\|^{2},
\end{aligned}
\end{equation}
where $\loss_{n}\from\act\to\R$ is a sequence of $M$-Lipschitz loss functions, $\temp>0$ is a constant parameter, and the algorithm is initialized with $\score_{0} = 0$.

In view of Example \ref{ex:Euclidean}, \eqref{eq:OGD} corresponds to the strategy $\sigma=(\sigma_n^{h,\eta})_{n\geqslant 1}$ generated by the Euclidean regularizer $h$ on $\act$ \textendash\ defined itself as in \eqref{eq:regularizer-Euclidean}.
Theorem \ref{thm:noreg-disc-strong} thus yields the regret bound:
\begin{equation}
\label{eq:reg-bound-OGD}
\max_{x\in\act} \frac{1}{n} \reg_{n}(x)
	\leq \frac{\delta_{\act}^{2}}{2n\temp}
	+ \frac{\temp M^{2}}{2}
\end{equation}
with $\delta_{\act}^{2} = \max_{x\in\act} \|x\|_{2}^{2} - \min_{x\in\act} \|x\|_{2}^{2}$.
Accordingly, if the time horizon $n$ is known in advance, the optimal choice for $\temp$ is $\temp = \delta_{\act} /( M \sqrt{n})$, leading to a cumulative regret guarantee of $M \delta_{\act} \sqrt{n}$, which is essentially the bound derived by \citemor[Corollary.~2.7]{SS11} (see also \citemor[Theorem~3.1]{Bub11} for the greedy variant).%
\footnote{For the difference between lazy and greedy variants, see Section {\ref{sec:greedy-versus-lazy}}.}

\subsubsection{Online Mirror Descent}
\label{sec:OMD}

The family of (lazy) \acf{OMD} algorithms studied by Shalev-Shwartz \cite{SS07,SS11} is the most general family of strategies that we discuss in this section (see also \citemor{Bub11} for a greedy version).
In particular, the \ac{OMD} class of strategies contains \ac{EW} and \ac{OGD} as special cases, and it is also equivalent to the family of \acf{FTRL} algorithms in the case of linear payoffs (\citemor{SS11}, \citemor{Haz12}).

Following \citemor{SS11} (and with notation as in Section \ref{sec:model-OCO}), let $\ell_{n}\from\act\to\R$ be a sequence of convex functions which are $M$-Lipschitz with respect to some norm $\|\cdot\|$ on $\R^{d}$.
Then, given a regularizer function $h$ on $\act$, the lazy \ac{OMD} algorithm is defined by means of the recursion:
\begin{equation}
\label{eq:OMD}
\tag{OMD-L}
\begin{aligned}
\score_{n}
	&\in \score_{n-1} - \temp\,\pd\loss_{n}(x_{n}),
	\\
x_{n+1}
	&= \choice_{h}(U_{n}),
\end{aligned}
\end{equation}
where $\temp>0$ is a \emph{fixed} parameter and the algorithm is initialized with $\score_{0} = 0$.
As a result, if $h$ is taken $K$-strongly convex with respect to $\|\cdot\|$, Theorem~\ref{thm:noreg-disc-strong} immediately yields the known regret bound for \ac{OMD}:
\begin{equation}
\max_{x\in\act} \reg_{n}(x)
	\leq \frac{h_{\max} - h_{\min}}{\temp}
	+ \frac{\temp M^{2} n}{2K}.
\end{equation}

\subsection{Links with Convex Optimization}
\label{sec:convex}
Ordinary convex programs can be seen as online optimization problems where the loss function remains constant over time and the agent seeks to attain its minimum value.
In what follows, we outline how regret-minimizing strategies can be used for this purpose
and we describe the performance gap incurred by using a method with a variable step-size instead of a variable parameter.

Let $f\from\act\to\R$ be a convex real-valued function on $\act$ and let $(\gamma_{n})_{n\geq1}$ be a positive sequence (which we will later interpret as a sequence of step-sizes);
also, given a sequence $(x_{n})_{n\geq1}$ in $\act$, let
\begin{align}
\label{eq:x-mod}
x_{n}^{\min}
	&\in \argmin_{1\leq k \leq n} f(x_{k}),
	&
x_{n}^{\gamma}
	&= \frac{\sum_{k=1}^{n} \gamma_{k} x_{k}}{\sum_{k=1}^{n}
  \gamma_{k}}.
\end{align}
If we use the notation $x_{n}'\in\{x_{n}^{\min},x_{n}^{\gamma}\}$ to refer interchangeably to either $x_{n}^{\min}$ or $x_{n}^{\gamma}$, Jensen's inequality readily gives:
\begin{equation}
\label{eq:x-mod-comp}
f(x_{n}')
	\leq \frac{\sum_{k=1}^{n} \gamma_{k} f(x_{k})}{\sum_{k=1}^{n}\gamma_{k}}.
\end{equation}
Now consider the algorithm:
\begin{equation}
\label{eq:algo-cvx}
\begin{aligned}
\score_n
	&\in \score_{n-1} - \gamma_{n} \pd f(x_{n}),\\
x_{n+1}
	&= \choice_{h}(\temp_{n} \score_{n}),
\end{aligned}
\end{equation}
where $\gamma_{n}$ is a sequence of step sizes and $\temp_{n}$ is a sequence of parameters.
In the case of a constant parameter $\eta_n=1$, \eqref{eq:algo-cvx} then becomes
\begin{equation}
\label{eq:LMD}
\tag{MD-L}
\begin{aligned}
\score_{n}
	&\in \score_{n-1} - \gamma_{n} \pd f(x_{n}),
	\\
x_{n+1}
	&= \choice_{h}(\score_{n}).
\end{aligned}
\end{equation}
which is a lazy variant of the \acf{MD} algorithm (\citemor{NY83}). In particular, if $h$ is the Euclidean regularizer on $\act$, the algorithm boils down to a lazy version of the standard \acf{PSG} method:
\begin{equation}
\tag{PSG-L}
\label{eq:LPSG}
\begin{aligned}
\score_{n}
	&\in \score_{n-1} - \gamma_{n} \pd f(x_{n}),
	\\
x_{n+1}
	&= \argmin_{x\in\act} \|x - \score_{n}\|_{2}.
\end{aligned}
\end{equation}

The following corollary shows that these lazy versions guarantee the same value convergence bounds as the corresponding greedy variants \textemdash\ see e.g.~\citemor[Theorem~4.1]{BT03}.

\begin{corollary}
[Constant parameter, variable step size]
\label{cor:varstep}
Let $f\from\act\to\R$ be an $M$-Lipschitz convex function and let $(x_{n})_{n\geq1}$ be the sequence of play generated by \eqref{eq:LMD} for some $K$-strongly convex regularizer $h$ on $\act$.
Then, the adjusted iterates $x_{n}' \in \{x_{n}^{\min},x_{n}^{\gamma}\}$ of $x_{n}$ satisfy:
\begin{equation}
\label{eq:conv-bound-varstep}
f(x_{n}')
	\leq f_{\min}
	+ \frac{h_{\max} - h_{\min} + \frac{1}{2}M^{2} K^{-1}\sum_{k=1}^{n} \gamma_{k}^{2}}{\sum_{k=1}^{n} \gamma_{k}}.
\end{equation}
\end{corollary}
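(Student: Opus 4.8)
The plan is to recognize \eqref{eq:LMD} as an instance of the abstract strategy \eqref{eq:AS} run with a \emph{constant} parameter $\temp_n \equiv 1$, and then to feed the linear regret bound of Theorem~\ref{thm:noreg-disc-strong} back into the convexity of $f$. Concretely, I would choose a subgradient $v_k \in \pd f(x_k)$ at each stage and set $\pay_k = -\gamma_k v_k \in V^\ast$. Since $\score_n = \score_{n-1} - \gamma_n v_n = \sum_{k=1}^n \pay_k$ (with $\score_0 = 0$) and $x_{n+1} = \choice_h(\score_n) = \choice_h\bigl(\sum_{k=1}^n \pay_k\bigr)$, the sequence $(x_n)_{n\geq1}$ is exactly the sequence of play generated by \eqref{eq:AS} for these payoff vectors with $\temp_n \equiv 1$. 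The key bookkeeping point — and what ultimately produces a $\sum_k \gamma_k^2$ term rather than a $\sum_k \gamma_k$ term — is that, because the parameter is frozen at $1$, the step size must be absorbed into the payoff vector; consequently $\|\pay_k\|_\ast = \gamma_k \|v_k\|_\ast \leq \gamma_k M$, the last inequality being the $M$-Lipschitz continuity of $f$.

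With this identification, I would invoke the bound \eqref{eq:reg-disc-unbounded} with $\temp_{k-1} = 1$ to obtain
\[
\max_{x\in\act}\reg_n^{\sigma,\pay}(x) \leq (h_{\max}-h_{\min}) + \frac{1}{2K}\sum_{k=1}^n \|\pay_k\|_\ast^2 \leq (h_{\max}-h_{\min}) + \frac{M^2}{2K}\sum_{k=1}^n \gamma_k^2.
\]
On the other hand, unwinding the definition \eqref{eq:regret-linear} with $\pay_k = -\gamma_k v_k$ gives the identity $\reg_n^{\sigma,\pay}(x) = \sum_{k=1}^n \gamma_k \braket{v_k}{x_k - x}$, and the subgradient inequality $f(x_k) - f(x) \leq \braket{v_k}{x_k - x}$ yields $\sum_{k=1}^n \gamma_k \bigl(f(x_k) - f(x)\bigr) \leq \reg_n^{\sigma,\pay}(x)$ for every $x \in \act$. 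Evaluating at a minimizer $x^\star \in \argmin_{x\in\act} f(x)$ (which exists since $f$ is continuous and $\act$ is compact), so that $f(x^\star) = f_{\min}$, and dividing through by $\sum_{k=1}^n \gamma_k$ bounds the weighted average $\sum_k \gamma_k f(x_k) / \sum_k \gamma_k$ by the right-hand side of \eqref{eq:conv-bound-varstep}. Jensen's inequality in the form \eqref{eq:x-mod-comp} then transfers this estimate to $f(x_n')$, which is precisely the claimed bound.

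I do not expect a genuine analytic obstacle here, as the statement is essentially a corollary of Theorem~\ref{thm:noreg-disc-strong}; the care lies entirely in the reduction. The one place to stay vigilant is the encoding step described above — absorbing $\gamma_n$ into $\pay_n$ while holding $\temp_n \equiv 1$ — since this is exactly the mechanism responsible for the $\frac{1}{2}M^2K^{-1}\sum_k \gamma_k^2$ term and hence for the performance difference between a variable step size and a variable parameter. A secondary point is that both averaging schemes $x_n^{\min}$ and $x_n^{\gamma}$ are covered simultaneously by \eqref{eq:x-mod-comp}, so that the single generic notation $x_n' \in \{x_n^{\min}, x_n^{\gamma}\}$ is handled by one inequality, and that passing to $x^\star$ rather than to the maximizer of the regret is what leaves $f_{\min}$ on the right-hand side.
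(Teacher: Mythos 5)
Your proposal is correct and follows essentially the same route as the paper's own proof: encode the step sizes into the payoff vectors $\pay_k\in-\gamma_k\pd f(x_k)$ with the parameter frozen at $\temp_n\equiv 1$, bound $\reg_n^{\sigma,\pay}(x)\geq\sum_{k=1}^n\gamma_k\bigl(f(x_n')-f(x)\bigr)$ via the subgradient inequality and \eqref{eq:x-mod-comp}, evaluate at a minimizer, and apply Theorem~\ref{thm:noreg-disc-strong} together with $\|\pay_k\|_\ast\leq\gamma_k M$. No gaps.
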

\begin{Proof}
With $\sigma=(\sigma_n^{h,\eta_n})_{n\geqslant 1}$, $\pay_{k} \in - \gamma_{k} \pd f(x_{k})$ and $x_{n}'\in\{x_{n}^{\min}, x_{n}^{\gamma}\}$, we have:
\begin{flalign}
\label{eq:value-bound1}
\reg_{n}^{\sigma,\pay}(x)	
	= \sum_{k=1}^{n} \braket{\pay_{k}}{x - x_{k}}
	\geq - \sum_{k=1}^{n} \gamma_{k} \big(f(x) - f(x_{k})\big)
	\geq \sum_{k=1}^{n} \gamma_{k}\cdot \big(f(x_{n}') - f(x)\big),
\end{flalign}
where the last step follows from \eqref{eq:x-mod-comp}.
By taking $x\in\argmin f$, we then obtain:
\begin{equation}
\label{eq:value-bound2}
f(x_{n}') - f_{\min}
	\leq \frac{\reg_{n}^{\sigma,\pay}(x)}{\sum_{k=1}^{n} \gamma_{k}}.
\end{equation}
The result then follows by applying Theorem~\ref{thm:noreg-disc-strong} and using the fact that $\|\pay_{k} \|_{\ast} \leq \| \gamma_{k} \partial f(x_{k}) \|_{\ast} \leq \gamma_{k} M$ (recall that $f$ is $M$-Lipschitz continuous).
\end{Proof}

One can see that the best convergence rate that we get with constant $\temp$
and step-sizes of the form $\gamma_{n} \propto n^{-\alpha}$ is $\bigoh(\log n/\sqrt{n})$ for $\alpha=1/2$ (and there is no straighforward choice of $\gamma_{n}$ leading to a better convergence rate).
On the other hand, by taking a \emph{constant} step-size $\gamma_{n} = 1$ and \emph{varying the algorithm's parameter} $\temp_{n} \propto n^{-1/2}$, we do achieve an $\bigoh(n^{-1/2})$ rate of convergence.

\begin{corollary}
[Constant step size, variable parameter]
\label{cor:vartemp}
With notation as in Corollary~\ref{cor:varstep}, let $(x_{n})_{n\geq1}$ be the sequence of play generated by \eqref{eq:algo-cvx} with
\begin{equation}
\label{eq:temp-opt}
\temp_{n}
	= \frac{1}{M}\sqrt{\frac{K(h_{\max} - h_{\min})}{n}},
\end{equation}
and constant $\gamma_{n} = 1$.
Then, the adjusted iterates $x_{n}' \in \{x_{n}^{\min},x_{n}^{\gamma}\}$ of $x_{n}$ guarantee
\begin{equation}
\label{eq:value-bound-vartemp}
f(x_{n}')
	\leq f_{\min} + 2M \sqrt{\frac{h_{\max} - h_{\min}}{K}}\left( \frac{1}{\sqrt{n}}+\frac{1}{4n} \right).
\end{equation}
\end{corollary}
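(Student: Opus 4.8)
The plan is to recognize this corollary as a direct splicing of the value-to-regret reduction established inside the proof of Corollary~\ref{cor:varstep} with the square-root regret guarantee of Corollary~\ref{cor:noreg-disc-root}; no new estimate is needed. First I would make the reduction explicit. Taking the constant step size $\gamma_n = 1$ and choosing subgradients $\pay_k \in -\pd f(x_k)$ at each stage, the recursion \eqref{eq:algo-cvx} becomes $\score_n = \sum_{k=1}^n \pay_k$ and $x_{n+1} = \choice_h\big(\temp_n \sum_{k=1}^n \pay_k\big)$, so the sequence of play coincides exactly with the one generated by the strategy $\sigma = (\sigma_n^{h,\temp_n})_{n\geq1}$ of \eqref{eq:AS} driven by the payoff stream $(\pay_k)_{k\geq1}$. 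Since $f$ is $M$-Lipschitz, every such subgradient obeys $\|\pay_k\|_\ast \leq M$, which is precisely the norm hypothesis under which Corollary~\ref{cor:noreg-disc-root} is stated.

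Next I would import the value bound \eqref{eq:value-bound2} from the proof of Corollary~\ref{cor:varstep}, which holds verbatim here since it is derived before any choice of $\temp_n$ is fixed. With $\gamma_k = 1$ we have $\sum_{k=1}^n \gamma_k = n$, so for any minimizer $x\in\argmin f$ it reads
\begin{equation*}
f(x_n') - f_{\min} \leq \frac{\reg_n^{\sigma,\pay}(x)}{n} \leq \frac{1}{n}\,\max_{x\in\act}\reg_n^{\sigma,\pay}(x).
\end{equation*}
The decisive observation is that the prescribed parameter \eqref{eq:temp-opt}, namely $\temp_n = M^{-1}\sqrt{K(h_{\max}-h_{\min})/n}$, is exactly the parameter $\temp_n = \temp/\sqrt{n}$ with $\temp = M^{-1}\sqrt{K(h_{\max}-h_{\min})}$ used in Corollary~\ref{cor:noreg-disc-root}. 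Hence that corollary applies without modification and bounds the right-hand side by $\tfrac{1}{n}\cdot 2M\sqrt{(h_{\max}-h_{\min})/K}\,\big(\tfrac14 + \sqrt{n}\big)$; distributing $1/n$ through the parenthesis gives $2M\sqrt{(h_{\max}-h_{\min})/K}\,\big(n^{-1/2} + \tfrac14 n^{-1}\big)$, which is precisely the claimed bound \eqref{eq:value-bound-vartemp}.

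I do not expect any genuine obstacle: the whole argument is the recognition that the chosen parameter matches that of Corollary~\ref{cor:noreg-disc-root}, combined with the trivial bookkeeping $\sum\gamma_k = n$ inherited from Corollary~\ref{cor:varstep}. The only point requiring a word of care is the verification that $M$-Lipschitz continuity of $f$ furnishes the bound $\|\pay_k\|_\ast \leq M$ on the dual norm of the subgradients used as payoff vectors, so that the hypothesis of Corollary~\ref{cor:noreg-disc-root} is legitimately met.
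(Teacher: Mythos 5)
Your proposal is correct and matches the paper's intended argument: the paper's proof is simply stated as ``Similar to the proof of Corollary~\ref{cor:varstep}'', i.e.\ precisely the value-to-regret reduction \eqref{eq:value-bound2} with $\sum_k\gamma_k=n$, combined with the regret bound of Corollary~\ref{cor:noreg-disc-root} for the parameter \eqref{eq:temp-opt}. Your identification of \eqref{eq:temp-opt} with the parameter of Corollary~\ref{cor:noreg-disc-root} and the check that $M$-Lipschitz continuity gives $\|\pay_k\|_{\ast}\leq M$ are exactly the points that make the splicing work.
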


\begin{Proof}
Similar to the proof of Corollary \ref{cor:varstep}.
\end{Proof}

\subsection{Noisy Observations and Links with Stochastic Convex Optimization}
 \label{sec:stochastic}

Assume that at every stage $n=1,2,\dotsc$ of the decision process, the agent does not observe the actual payoff vector $\pay_{n} \in V^{\ast}$, but the realization of a random vector $\wilde\pay_{n}$ with $\ex\big[ \wilde\pay_{n}|\wilde\pay_{n-1},\dotsc, \wilde\pay_{1} \big] = \pay_{n}$.
In this case, a learning strategy $\sigma$ can be used with the observed vectors $\wilde\pay_{n}$, thus leading to a (random) sequence of play $\wilde x_{n+1} = \sigma_{n+1}(\wilde\pay_{1},\dotsc,\wilde\pay_{n})$
\textendash\ see e.g. \citemor[Section 4.1]{SS11} for a model of this kind.

In this framework, the agent's (maximal) cumulative regret will be given by
\begin{equation}
\max_{x\in\act} \sum_{k=1}^{n} \braket{\pay_{k}}{x - \wilde x_{k}}.
\end{equation}
On the other hand, $\sum_{k=1}^{n} \braket{\wilde\pay_{k}}{x - \wilde x_{k}}$ can be interpreted as the agent's cumulative regret against the observed payoff sequence $(\wilde\pay_{n})_{n\geq1}$.
Thus, if $h$ is a $K$-strongly convex regularizer on $\act$ and $\|\wilde\pay_{k}\|_{\ast} \leq M$ (a.s.), Theorem \ref{thm:noreg-disc-strong} yields:
\begin{equation}
\ex\left[\max_{x\in\act} \sum_{k=1}^{n} \braket{\pay_{k}}{x - \wilde x_{k}}\right]
	= \max_{x\in\act} \sum_{k=1}^{n} \left[\ex\braket{\wilde\pay_{k}}{x - \wilde x_{k}} - \ex\braket{\wilde\pay_{k} - \pay_{k}}{x - \wilde x_{k}}\right]
	\leq R_{n},
\end{equation}
where $R_{n}$ is the regret guarantee of \eqref{eq:reg-disc-bounded} and we have used the easily verifiable fact that $\ex[\braket{\wilde\pay_{k} - \pay_{k}}{\wilde x_{k} - x}] = 0$ (recall that $\ex[\wilde\pay_{k} \vert \wilde\pay_{k-1},\dotsc,\wilde\pay_{1}] = 0$ and that $\wilde x_{k}$ only depends on $\wilde\pay_{k-1},\dotsc,\wilde\pay_{1}$).
This bound is of the same form as that of e.g. \citemor[Theorem 4.1]{SS11};
furthermore, by the strong law of large numbers for martingale differences (\citemor[Theorem~2.18]{HH80}), we also obtain the stronger statement that
\begin{equation}
\max_{x\in\act} \sum_{k=1}^{n} \braket{\pay_{k}}{x - \wilde x_{k}}
	\leq R_{n} + o(n)
	\quad\text{(a.s.)},
\end{equation}
i.e., if the parameter $\temp_{n}$ is suitably chosen, then the strategy \eqref{eq:AS} with noisy observations still leads to no regret (a.s.).

The above can be adapted to the framework of stochastic convex optimization as follows:
let $f\from\act\to\R$ be a Lipschitz convex function on $\act$, let $(\gamma_{n})_{n\geq1}$ be a positive sequence of step sizes, and consider the strategy $\sigma$ generated by \eqref{eq:AS} with $\temp=1$ and $h$ a $K$-strongly convex regularizer on $\act$.
Then, the sequence of play
\begin{equation}
\label{eq:AS-stoch}
\wilde x_{n+1} = \sigma_{n+1}(-\gamma_{1}\wilde g_{1},\dotsc,-\gamma_{n} \wilde g_{n})
	= \choice_{h}\left(-\sum_{k=1}^{n}\gamma_{k}\wilde g_{k} \right)
\end{equation}
where $\wilde g_{n}$ is a random vector with $\ex\big[ \wilde g_{n} | \wilde g_{n-1},\dotsc,\wilde g_{1}\big] \in \partial f(\wilde{x}_n)$ may be written recursively as:
\begin{equation}
\label{eq:MDSA-L}
\tag{MDSA-L}
\begin{aligned}
\wilde \score_{n}
	&\in \wilde \score_{n-1} - \gamma_{n}\pd f(\wilde x_{n}),
	\\
\wilde x_{n+1}
	&= \choice_{h}(\wilde \score_{n}).
\end{aligned}
\end{equation}

This algorithm may be seen as a lazy version of the so-called \ac{MDSA} process of \citemor{NJLS09};
in particular, using the Euclidean regularizer leads to the lazy \ac{SPSG} method:
\begin{equation}
\label{eq:SPSG-L}
\tag{SPSG-L}
\begin{aligned}
\wilde \score_{n}
	&\in \wilde \score_{n-1} - \gamma_{n}\pd f(\wilde x_{n}),
	\\
\wilde x_{n+1}
	&= \argmin_{x\in \act}\| x-\wilde{U}_n \|_2 .
\end{aligned}
\end{equation}
Setting $\pay_{n} = -\gamma_{n} g_{n}$, $\wilde\pay_{n} = -\gamma_{n} \wilde g_{n}$ and taking $\wilde x_{n}'\in\{\wilde x_{n}^{\min},\wilde x_{n}^{\gamma}\}$ as before, Corollary~\ref{cor:varstep} combined with our previous discussion then gives:
\begin{flalign}
\ex\left[f(\wilde x_{n}') - f(x)\right]
	&\leq \frac{1}{\sum_{k=1}^{n} \gamma_{k}} \sum_{k=1}^{n} \ex\braket{\pay_{k}}{x - \wilde x_{k}}
	\notag\\
	&\leq \frac{h_{\max} - h_{\min} + \frac{1}{2} M^{2} K^{-1} \sum_{k=1}^{n} \gamma_{k}^{2}}{\sum_{k=1}^{n} \gamma_{k}},
\end{flalign}
which is essentially the same value guarantee as that of greedy
\ac{MDSA} (\citemor[Eq.~2.41]{NJLS09}).

\begin{table}[t]
\begin{center}
\sc
\renewcommand{\arraystretch}{1.6}
\begin{tabular}{cccccc}
\hline
algorithm
	&\hspace{.5em} $\act$ \hspace{.5em}
	&\hspace{1em} $h(x)$ \hspace{1em}
	&$\temp_{n}$
	&input
	&\sc norm
	\\
\hline
\hline
\ref{eq:EW}
	&$\Delta_d$
	&$\sum_{i} x_{i}\log x_{i}$
	&constant
	&$\pay_{n}$
	&$\ell^{1}$
	\\
\hline
\ref{eq:EW'}
	&$\Delta_{d}$
	&$\sum_{i} x_{i}\log x_{i}$
	&$\temp/\sqrt{n}$
	&$\pay_{n}$
	&$\ell^{1}$
	\\
\hline
\ref{eq:SFP}
	&$\Delta_d$
	&any
	&$\temp/n$
	&$\pay_{n}$
	&$\ell^{1}$
	\\
\hline
\ref{eq:VSFP}
	&$\Delta_d$
	&any
	&$\temp n^{-\alpha}
	\;\;(0<\alpha<1)$
  	&$\pay_{n}$
	&$\ell^{1}$
	\\
\hline
\ref{eq:OGD}
	&any
	&$\frac{1}{2} \left\| x \right\|_2^2 $
	&constant
	&$-\nabla f_{n}(x_{n})$
	&$\ell^{2}$
	\\
\hline
\ref{eq:OMD}
	&any
	&any
	&constant
	&$-\nabla f_{n}(x_{n})$
	&any
	\\
\hline
\ref{eq:LPSG}
	&any
	&$\frac{1}{2} \left\| x \right\|_2^2 $
	&$1$
	&$-\gamma_{n}\nabla f(x_{n})$
	&$\ell^{2}$
	\\
\hline
\ref{eq:LMD}
	&any
	&any
	&$1$
	&$-\gamma_{n} \nabla f(x_{n})$
	&any
	\\
\hline
\ref{eq:MDSA-L}
	&any
	&any
	&$1$
	&$-\gamma_{n} (\nabla f(x_{n})+\xi_n)$
	&any
	\\
\hline
\ref{eq:SPSG-L}
	&any
	&$\frac{1}{2} \left\| x \right\|_2^2 $
	&$1$
	&$-\gamma_{n}(\nabla f(x_{n})+\xi_n)$
	&$\ell^{2}$
	\\
\hline
\end{tabular}
\vspace{2ex}
\end{center}
\caption{%
\footnotesize
Summary of the algorithms discussed in Section~\ref{sec:previous}.
The suffix ``L'' indicates a ``lazy'' variant;
the \textsc{input} column stands for the stream of payoff vectors which is used as input for the algorithm and the \textsc{norm} column specifies the norm of the ambient space;
finally, $\xi_{n}$ represents a zero-mean stochastic process with values in $\R^{d}$.}
\label{tab:algorithms}
\end{table}


\section{Discussion}
\label{sec:discussion}

\subsection{On the optimal choice of $h$}
\label{sec:optimal-choice-h}
As mentioned in the discussion after Corollary \ref{cor:noreg-disc-root}, the following open question arises:
\emph{given a norm $\|\cdot\|$ on $V$ and a compact, convex subset
  $\act\subseteq V$, which $1$-strongly convex regularizer on
  $h\from\act\to\R$ has minimal depth $\delta_h = h_{\max} - h_{\min}$?}

As the following proposition shows, in the case of the Euclidean norm
on $V$, this minimal depth is half the radius squared of the smallest enclosing sphere of $\act$:


\begin{proposition}
Let $h\from\act\to\R$ be a $1$-strongly convex regularizer function on $\act$ with respect to the $\ell^{2}$ norm $\|\cdot\|_{2}$ on $V$.
Then:
\begin{equation}
\label{eq:depth-opt-Euclidean}
h_{\max} - h_{\min}
	\geq \frac{1}{2} \min_{x'\in\act} \max_{x\in\act} \|x' - x\|_{2}^{2},
\end{equation}
and equality is attained by taking
\begin{equation}
\label{eq:h-opt-Euclidean}
h(x)
	=\begin{cases}
	\frac{1}{2} \|x - x_{0}\|_{2}^{2}
		&\text{if $x\in\act$},
	\\
	+\infty
		&\text{otherwise},
	\end{cases}
\end{equation}
where $x_{0} \in \argmin_{x'\in\act} \max_{x\in\act} \|x' - x\|_{2}^{2}$ is the center of the smallest enclosing sphere of $\act$.
\end{proposition}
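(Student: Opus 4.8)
The plan is to prove the two assertions separately: first the lower bound \eqref{eq:depth-opt-Euclidean}, which holds for \emph{every} $1$-strongly convex regularizer, and then the fact that the explicit choice \eqref{eq:h-opt-Euclidean} turns the inequality into an equality.

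For the lower bound, I would anchor everything at the minimizer of $h$. Since $h$ is strictly convex and continuous on the compact convex set $\act$, it attains its minimum at a unique point $x^{*}\in\act$ with $h(x^{*}) = h_{\min}$. The key step is to extract a quadratic growth estimate from strong convexity \emph{without} invoking differentiability of $h$ (regularizers need not be differentiable, by Definition~\ref{def:regularizer}). Applying \eqref{eq:strong.convex} with $K=1$, $w_{1} = x^{*}$ and $w_{2} = x$ for an arbitrary $x\in\act$, and using $h(\lambda x^{*} + (1-\lambda)x) \geq h_{\min}$ together with a division by $1-\lambda>0$, one obtains
\begin{equation*}
h_{\min} \leq h(x) - \tfrac{1}{2}\lambda\,\|x - x^{*}\|_{2}^{2}.
\end{equation*}
Letting $\lambda\to1^{-}$ yields $h(x) \geq h_{\min} + \tfrac{1}{2}\|x - x^{*}\|_{2}^{2}$ for all $x\in\act$. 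Maximizing over $x\in\act$ then gives $h_{\max} - h_{\min} \geq \tfrac{1}{2}\max_{x\in\act}\|x-x^{*}\|_{2}^{2}$, and since $x^{*}\in\act$ this is at least $\tfrac{1}{2}\min_{x'\in\act}\max_{x\in\act}\|x'-x\|_{2}^{2}$, which is exactly \eqref{eq:depth-opt-Euclidean}.

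For the equality, take $h$ as in \eqref{eq:h-opt-Euclidean} with $x_{0}$ the center of the smallest enclosing sphere. I would first verify that $h$ is a genuine $1$-strongly convex regularizer: its effective domain is $\act$, and $\tfrac{1}{2}\|x-x_{0}\|_{2}^{2}$ is continuous, strictly convex, and $1$-strongly convex with respect to $\|\cdot\|_{2}$. The decisive geometric point is that $x_{0}\in\act$: if the center lay outside $\act$, then projecting it onto the convex set $\act$ would, by the obtuse-angle characterization of Euclidean projections onto convex sets, strictly decrease the distance to every point of $\act$ and hence produce a strictly smaller enclosing sphere, contradicting minimality. Granting $x_{0}\in\act$, we get $h_{\min}=0$ (attained at $x_{0}$) and $h_{\max} = \tfrac{1}{2}\max_{x\in\act}\|x-x_{0}\|_{2}^{2} = \tfrac{1}{2}R^{2}$, where $R$ is the circumradius. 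Since $x_{0}=\argmin_{x'\in\act}\max_{x\in\act}\|x'-x\|_{2}^{2}$ by definition, we have $R^{2}=\min_{x'\in\act}\max_{x\in\act}\|x'-x\|_{2}^{2}$, so $h_{\max}-h_{\min} = \tfrac{1}{2}\min_{x'\in\act}\max_{x\in\act}\|x'-x\|_{2}^{2}$, matching the bound.

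The main obstacle is the lower-bound step, namely converting strong convexity into the quadratic lower bound $h(x)\geq h_{\min}+\tfrac{1}{2}\|x-x^{*}\|_{2}^{2}$ while keeping $h$ only strictly convex and continuous; the $\lambda\to1$ limit in \eqref{eq:strong.convex} resolves this cleanly and avoids any appeal to $\nabla h$. A secondary point requiring care is the verification that $x_{0}\in\act$, which is precisely what forces $h_{\min}=0$ and is therefore essential for achieving equality.
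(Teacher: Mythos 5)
Your proof is correct and follows essentially the same route as the paper's: both anchor at the minimizer $x^{*}$ of $h$, use strong convexity to obtain the quadratic growth $h(x)\geq h_{\min}+\tfrac{1}{2}\|x-x^{*}\|_{2}^{2}$ on $\act$, then maximize over $x$ and compare with the Chebyshev radius, the equality case being immediate for \eqref{eq:h-opt-Euclidean}. The only cosmetic difference is that the paper derives the quadratic growth from strong convexity together with $0\in\pd h(x^{*})$, whereas you extract it directly from the midpoint inequality \eqref{eq:strong.convex} via the limit $\lambda\to1^{-}$, which is a slightly more self-contained variant of the same step.
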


\begin{proof}
Letting $x_{1} \in \argmin_{x\in\act} h(x)$ and $x_{2} \in \argmax_{x\in\act} \|x - x_{1}\|_{2}^{2}$,
we readily get:
\begin{flalign}
h_{\max} - h_{\min}
	&\geq h(x_{2}) - h(x_{1})
	\notag\\
	&\geq \frac{1}{2} \|x_{2} - x_{1}\|_{2}^{2}
	= \frac{1}{2} \max_{x\in\act} \|x - x_{1}\|_{2}^{2}
	\geq \frac{1}{2} \min_{x'\in\act} \max_{x\in\act} \|x - x'\|_{2}^{2},
\end{flalign}
where the second inequality follows from the strong convexity of $h$ and the fact that $\pd h(x_{1}) \ni 0$.
That \eqref{eq:h-opt-Euclidean} attains the bound \eqref{eq:depth-opt-Euclidean} is then a trivial consequence of its definition, as is its geometric characterization.
%
%
\end{proof}

Despite the simplicity of the bound \eqref{eq:depth-opt-Euclidean}, this analysis does not work for an arbitrary norm because $\frac{1}{2}\left\| x-x_0 \right\|^2$ might fail to be $1$-strongly convex with respect to $\|\cdot\|$
\textendash\ for instance, $\|x - x_{0}\|_{1}^{2}$ is not even \emph{strictly} convex.

\subsection{Greedy versus Lazy}
\label{sec:greedy-versus-lazy}


%


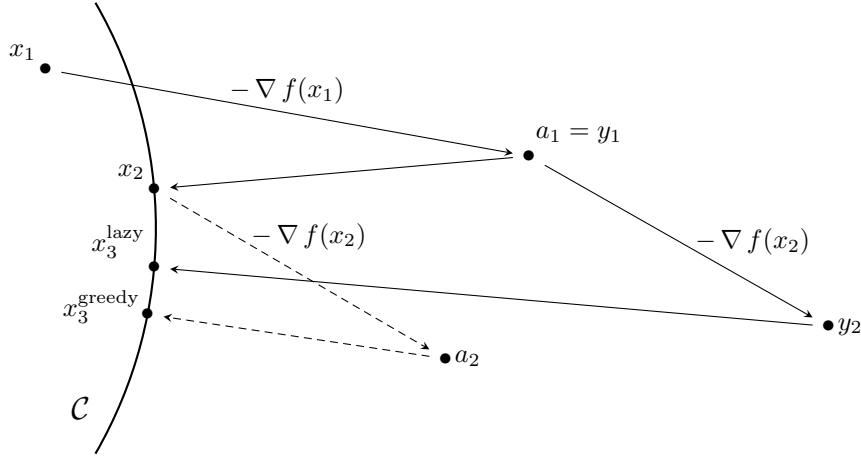
\begin{figure}[t]
  \centering
\begin{tikzpicture}
[scale=2]

\draw node at (2.5,-1.2) {\Large$\act$};
\draw [thick] (30:3) arc (30:-30:3);

\node (xn) at (25:2.5) {\textbullet};
\node [above left] at (25:2.5) {$x_{1}$};
\node (an1) at (5:5.5) {\textbullet};
\node [right] at (6.5:5.5) {$a_{1} = y_{1}$};
\draw[-stealth] (xn) -- (an1) node [midway,above]{$-\grad f(x_{1})$};

\node (xn1) at (5:3) {\textbullet};
\node [above left] at (5:3) {$x_{2}$};
\draw [-stealth] (an1) -- (xn1);

\node (an2) at (-10:5) {\textbullet};
\node (an22) at (-5:7.5) {\textbullet};
\node[right] at (-5:7.5) {$y_{2}$};
\node [right] at (-10:5) {$a_{2}$};

\draw [densely dashed, -stealth] (xn1) -- (an2) node [near start, right] {\;$-\grad f(x_{2})$};
\node (xn2) at (-11:3) {\textbullet};

\draw [-stealth] (an1) -- (an22) node[midway, right]{\;$-\grad f(x_{2})$};
\node [left] at (-10:3) {$x_{3}^{\text{greedy}}$};
\draw [densely dashed, -stealth] (an2) -- (xn2);

\node (xn22) at (-5:3) {\textbullet};
\node[above left] at (-5:3) {$x_{3}^{\text{lazy}}$};
\draw[-stealth] (an22) -- (xn22);
\end{tikzpicture}

\caption{Graphical illustration of the greedy (dashed) and lazy (solid) branches of the \acf{PSG} method.}
\label{fig:comparison}
\end{figure}

To illustrate the difference between \emph{lazy} and \emph{greedy} variants, we first focus on the \ac{PSG} method run with constant step $\gamma=1$ for a smooth function $f\from\act\to\R$.
The two variants may then be expressed by means of the recursions:
\begin{subequations}
\begin{equation}
\begin{aligned}
a_{n}
	&= x_{n} - \grad f(x_{n})
	\\
x_{n+1}
	&= \argmin_{x\in\act} \|x - a_{n}\|_{2}
\end{aligned}
\end{equation}
for the greedy version and:
\begin{equation}
\begin{aligned}
y_{n}
	&= y_{n-1} - \grad f(x_{n})
	\\
x_{n+1}
	&= \argmin_{x\in\act} \|x - y_{n}\|_{2}
\end{aligned}
\end{equation}
\end{subequations}
for the lazy one.

As can be seen in Fig.~\ref{fig:comparison}, the greedy variant is based on the classical idea of gradient descent, i.e. adding $-\grad f(x_{n})$ to $x_{n}$ and projecting back to $\act$ if needed.
On the other hand, in the lazy variant, the gradient term $-\grad f(x_{n})$ is \emph{not} added to $x_{n}$, but to the ``unprojected'' iterate $y_{n}$;
we only project to $\act$ in order to obtain the algorithm's next iterate.
Owing to this modification, the lazy variant is thus driven by the sum $y_{n} = \sum_{k=1}^{n} \grad f(x_{n})$.

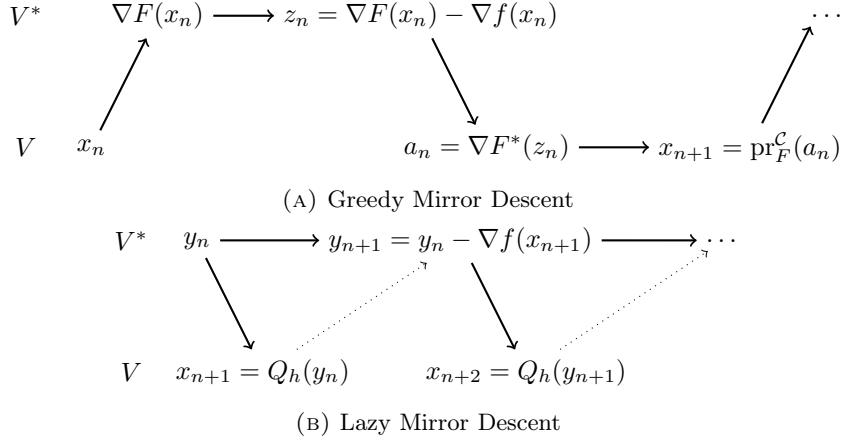
\begin{figure}[t]
  \centering
  \begin{subfigure}{\textwidth}
    \centering
    \begin{tikzpicture}[scale=1.75]
      \node (v) at (-.5,0) {$V$}; \node (vstar) at (-.5,1) {$V^*$};
      \node (xn) at (0,0) {$x_n$}; \node (nfxn) at (0.5,1) {$\nabla
        F(x_n)$}; \node (zn) at (2.5,1) {$z_n=\nabla F(x_n)-\nabla
        f(x_n)$}; \node (an) at (3,0) {$a_{n}=\nabla F^*(z_n)$}; \node
      (xn1) at (5,0) {$x_{n+1}=\pr_F^\act(a_{n})$}; \node (etc) at
      (5.5,1) {$\quad\dots$}; \draw [->,thick] (xn) -- (nfxn)
      ; \draw [->,thick] (nfxn) -- (zn)
      ; \draw [->,thick] (zn) -- (an)
      ; \draw [->,thick] (an) -- (xn1)
      ; \draw [->,thick] (xn1) -- (etc)
      ;
    \end{tikzpicture}
    \caption{Greedy Mirror Descent}
    \label{fig:gmd}
  \end{subfigure}
  \begin{subfigure}{\textwidth}
    \centering
    \begin{tikzpicture}[scale=1.75]
      \node (v) at (-.5,0) {$V$}; \node (vstar) at (-.5,1) {$V^*$};
      \node (yn) at (0,1) {$y_n$}; \node (xn1) at (.5,0)
      {$x_{n+1}=Q_h(y_n)$}; \node (yn1) at (2,1) {$y_{n+1}=y_n-\nabla
        f(x_{n+1})$}; \node (xn2) at (2.5,0) {$x_{n+2}=Q_h(y_{n+1})$};
      \node (etc) at (4,1) {$\dots$}; \draw [->,thick] (yn) -- (xn1) ;
 \draw [->,thick] (yn) -- (yn1) ;
 \draw [->,thick] (yn1) -- (xn2) ;
 \draw [->,thick] (yn1) -- (etc) ;
 \draw [->,dotted] (xn1) -- (yn1);
 \draw [->,dotted] (xn2) -- (etc);
    \end{tikzpicture}
    \caption{Lazy Mirror Descent}
    \label{fig:lmd}
  \end{subfigure}
\caption{Greedy and Lazy Mirror Descent with $\gamma_n=1$.}
\label{fig:greedy-lazy-md}
\end{figure}

In the case of \acl{MD} with an arbitrary regularizer function $h$, the lazy version has an implementation advantage over its greedy counterpart.
Specifically, given a proper convex function $F$ such that $F = h$ on $\act$ (cf. Example~\ref{ex:Bregman}), greedy mirror descent is defined as:
\begin{subequations}
\begin{equation}
\begin{aligned}
a_{n}
	&= \grad F^{\ast}\left(\grad F(x_{n}) - \grad f(x_{n})\right),
	\\
x_{n+1}
	&= \pr_{F}^{\act} (a_{n}),
\end{aligned}
\end{equation}
where the Bregman projection $\pr_F^{\act}(a_n)$ is given by \eqref{eq:Breg-projection};
on the other hand, lazy \ac{MD} is defined as
\begin{equation}
\begin{aligned}
y_{n}
	&= y_{n-1} - \grad f(x_{n}),
	\\
x_{n+1}
	&= \choice_{h}(y_{n}).
\end{aligned}
\end{equation}
\end{subequations}
The computation steps for each variant are represented in Figure~\ref{fig:greedy-lazy-md}.
The first step in the greedy version which consists in computing $\nabla F$ has no equivalent in the lazy version, which is thus computationally more lightweight.




\footnotesize
\bibliographystyle{ormsv080}
\bibliography{OnlineOptimization}

\end{document}